\newcommand{\comment}[1]{}
\begin{document}

\title{On weak$^*$-convergence in $H^1_L(\mathbb R^d)$}         

\author{Luong Dang Ky}

\keywords{weak$^*$-convergence, Schr\"odinger operator, Hardy space, VMO}
\subjclass[2010]{42B35, 46E15}

\begin{abstract}
Let $L= -\Delta+ V$ be a Schr\"odinger operator on $\mathbb R^d$, $d\geq 3$, where $V$ is a nonnegative function, $V\ne 0$, and belongs to the reverse H\"older class $RH_{d/2}$. In this paper, we prove a version of the classical theorem of Jones and Journ\'e on weak$^*$-convergence in  the Hardy space $H^1_L(\mathbb R^d)$.
\end{abstract}

\maketitle
\newtheorem{theorem}{Theorem}[section]
\newtheorem{lemma}{Lemma}[section]
\newtheorem{proposition}{Proposition}[section]
\newtheorem{remark}{Remark}[section]
\newtheorem{corollary}{Corollary}[section]
\newtheorem{definition}{Definition}[section]
\newtheorem{example}{Example}[section]
\numberwithin{equation}{section}
\newtheorem{Theorem}{Theorem}[section]
\newtheorem{Lemma}{Lemma}[section]
\newtheorem{Proposition}{Proposition}[section]
\newtheorem{Remark}{Remark}[section]
\newtheorem{Corollary}{Corollary}[section]
\newtheorem{Definition}{Definition}[section]
\newtheorem{Example}{Example}[section]
\newtheorem*{theoremjj}{Theorem J-J}
\newtheorem*{theorema}{Theorem A}
\newtheorem*{theoremb}{Theorem B}
\newtheorem*{theoremc}{Theorem C}

\section{Introduction}

A famous and classical result of Fefferman \cite{Fef} states that the John-Nirenberg space $BMO(\mathbb R^d)$ is the dual of the Hardy space $H^1(\mathbb R^d)$. It is also well-known that $H^1(\mathbb R^d)$ is one of the few examples of separable, nonreflexive Banach space which is a dual space. In fact,  let $VMO(\mathbb R^d)$ denote the closure of the space $C_c^\infty(\mathbb R^d)$ in $BMO(\mathbb R^d)$, where $C^\infty_c(\mathbb R^d)$ is the set of $C^\infty$-functions with compact support, Coifman and Weiss showed in \cite{CW} that $H^1(\mathbb R^d)$ is the dual space of $VMO(\mathbb R^d)$, which gives to $H^1(\mathbb R^d)$ a richer structure than $L^1(\mathbb R^d)$. For example, the classical Riesz transforms $\nabla (-\Delta)^{-1/2}$ are not bounded on $L^1(\mathbb R^d)$, but are bounded on  $H^1(\mathbb R^d)$. In addition, the weak$^*$-convergence is true in $H^1(\mathbb R^d)$, which is useful  in the application of Hardy spaces to compensated compactness (see \cite{CLMS}). More precisely, in \cite{JJ}, Jones and Journ\'e proved the following.

\begin{theoremjj}
Suppose that $\{f_j\}_{j\geq 1}$ is a bounded sequence in $H^1(\mathbb R^d)$, and that $f_j(x)\to f(x)$ for almost every $x\in\mathbb R^d$. Then, $f\in H^1(\mathbb R^d)$ and $\{f_j\}_{j\geq 1}$ weak$^*$-converges to $f$, that is, for every $\varphi\in VMO(\mathbb R^d)$, we have
$$\lim_{j\to\infty} \int_{\mathbb R^d} f_j(x) \varphi(x)dx = \int_{\mathbb R^d} f(x) \varphi(x) dx.$$
\end{theoremjj}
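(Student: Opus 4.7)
The plan is to prove first that $f \in H^1(\mathbb R^d)$, and then to establish the weak$^*$ convergence by reducing to test functions from a dense subclass of $VMO$.

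To show $f \in H^1$, I would appeal to the lower semicontinuity of the $H^1$ norm under a.e.\ convergence, a standard Fatou-type fact that follows, for example, from the grand maximal function characterization: if $\mathcal M h(x) = \sup_{\Phi,t>0}|h*\Phi_t(x)|$ is taken over a normalized family of Schwartz bumps, then combining $f_j\to f$ a.e.\ with the uniform $L^1$-bound on $\{f_j\}$ (from $H^1\hookrightarrow L^1$) and an Egorov-plus-tail-estimate argument yields $\mathcal Mf(x) \le \liminf_j \mathcal Mf_j(x)$ a.e., and Fatou's lemma gives $\|f\|_{H^1} \lesssim \liminf_j \|f_j\|_{H^1} < \infty$.

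For the weak$^*$ convergence, since $H^1(\mathbb R^d) = VMO(\mathbb R^d)^*$ and $VMO$ is separable, $\{f_j\}$ is weak$^*$-sequentially precompact by Banach--Alaoglu. A standard uniqueness-of-limit-point argument reduces the task to verifying that $\int f_j\varphi\,dx \to \int f\varphi\,dx$ for every $\varphi$ in a dense subset of $VMO$; I would take $\varphi \in C_c^\infty(\mathbb R^d)$. Setting $g_j := f_j - f$, which is bounded in $H^1$ and tends to $0$ a.e., the problem reduces to showing $\int g_j\varphi\,dx \to 0$ for each $\varphi \in C_c^\infty$.

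This last reduction is the technical heart of the argument and the main obstacle: boundedness in $H^1$ plus a.e.\ convergence does \emph{not} imply $L^1$-equi-integrability on $\text{supp}\,\varphi$, as moving-bump atomic examples show, so the naive splitting $g_j = g_j\chi_{\{|g_j|\le\lambda\}} + g_j\chi_{\{|g_j|>\lambda\}}$ into bounded and tail parts cannot handle the tail uniformly in $j$. One must exploit the cancellation intrinsic to $H^1$. Using an atomic decomposition $g_j = \sum_k\lambda_{j,k}a_{j,k}$ with $\sum_k|\lambda_{j,k}|\lesssim\|g_j\|_{H^1}$, I would split the atoms by whether the radius $r_{j,k}$ of the supporting ball is $\le\delta$ or $>\delta$. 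Small atoms contribute at most $\delta\|\nabla\varphi\|_\infty \sum_k|\lambda_{j,k}|$ via the zero-mean property $\int a_{j,k}=0$ and the Lipschitz regularity of $\varphi$, while large atoms have $\|a_{j,k}\|_\infty\lesssim\delta^{-d}$ and, on the compact set $\text{supp}\,\varphi$, their contribution is controlled by the a.e.\ convergence $g_j\to 0$ together with dominated convergence. Letting $j\to\infty$ first and $\delta\to 0$ afterwards closes the argument. The interplay between atom cancellation and the regularity of the test function is precisely what makes $H^1$ behave better than $L^1$ in this context, and it is exactly this interplay that one must generalize to handle $H^1_L$.
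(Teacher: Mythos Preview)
Theorem J--J is quoted in the paper as a known result of Jones and Journ\'e \cite{JJ} and is \emph{not} proved there; the paper's own contribution is the $H^1_L$ analogue (Theorem~\ref{the main theorem}), whose proof proceeds by localizing via the partition $\{\psi_{n,k}\}$ and invoking Dafni's weak$^*$-convergence theorem in each local Hardy space $h^1_n$. So there is no in-paper proof of the classical statement to compare against, and I assess your sketch on its own merits.

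Your reductions (Fatou-type argument for $f\in H^1$; Banach--Alaoglu plus density of $C_c^\infty$ in $VMO$) are fine. The gap is in the ``large atom'' step. Writing $g_j=S_j^\delta+G_j^\delta$ with $S_j^\delta$ the small-atom part and $G_j^\delta=\sum_{r_{j,k}>\delta}\lambda_{j,k}a_{j,k}$, you claim that $\int G_j^\delta\varphi\to 0$ by dominated convergence. Two things fail. First, domination: although each large atom obeys $\|a_{j,k}\|_\infty\lesssim\delta^{-d}$, the \emph{sum} $G_j^\delta$ carries no uniform pointwise bound, since a generic atomic decomposition permits unbounded overlap of the supporting balls. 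Second, and more seriously, pointwise convergence: you only know $g_j\to 0$ a.e., not $G_j^\delta\to 0$ a.e. The splitting $g_j=S_j^\delta+G_j^\delta$ depends on $j$ through the choice of atomic decomposition, and neither piece separately inherits a.e.\ convergence from $g_j$; your estimate $|\int S_j^\delta\varphi|\lesssim\delta$ is uniform in $j$ but says nothing about $S_j^\delta(x)$ pointwise. Letting $j\to\infty$ before $\delta\to 0$ therefore does not close.

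The actual Jones--Journ\'e argument avoids decomposing $g_j$ atomically. Its device is on the test-function side: given $\varepsilon>0$ one chooses $h\in C_c^\infty$ with $0\le h\le 1$, $h\equiv 1$ on $\mathrm{supp}\,\varphi$, and $\|h\|_{BMO}<\varepsilon$ (take $h=1$ on a huge ball and let it decay over a vastly larger annulus). One then truncates $g_j$ at a height $\lambda$; the bounded part is handled by dominated convergence on $\mathrm{supp}\,\varphi$, while the tail is controlled through the $H^1$--$BMO$ pairing of $g_j$ against $h$, which contributes $O(\varepsilon)$ uniformly in $j$. This auxiliary-$h$ mechanism is what should replace your large-atom step.
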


The aim of this paper is to prove an analogous version of the above theorem in the setting of function spaces associated with Schr\"odinger operators. 

Let $L= -\Delta+ V$ be a Schr\"odinger differential operator on $\mathbb R^d$, $d\geq 3$, where $V$ is a nonnegative potential, $V\ne 0$, and belongs to the reverse H\"older class $RH_{d/2}$. In the recent years, there is an increasing interest on the study of  the problems of harmonic analysis associated with  these operators, see for example \cite{DDSTY, DGMTZ, DZ, Ky2, LL, Sh, YZ2}. In \cite{DZ}, Dziuba\'nski and Zienkiewicz considered the Hardy space $H^1_L(\mathbb R^d)$ as the set of functions $f\in L^1(\mathbb R^d)$ such that $\|f\|_{H^1_L}:=\|\mathcal M_Lf\|_{L^1}<\infty$, where $\mathcal M_L f(x): = \sup_{t>0}|e^{-tL}f(x)|$. There, they characterized $H^1_L(\mathbb R^d)$ in terms of atomic decomposition and in terms of the Riesz transforms associated with $L$. Later, in \cite{DGMTZ}, Dziuba\'nski et al. introduced a $BMO$-type space $BMO_L(\mathbb R^d)$  associated with $L$, and established the duality between $H^1_L(\mathbb R^d)$ and $BMO_L(\mathbb R^d)$. Recently, Deng et al.  \cite{DDSTY} introduced and developed new $VMO$-type function spaces   $VMO_A(\mathbb R^d)$ associated with some  operators $A$  which have a bounded holomorphic functional calculus on $L^2(\mathbb R^d)$. When $A\equiv L$, their space  $VMO_L(\mathbb R^d)$ is just the set of all functions $f$ in $BMO_L(\mathbb R^d)$ such that $\gamma_1(f)= \gamma_2(f)= \gamma_3(f)=0$, where
$$\gamma_1(f)=\lim\limits_{r\to 0}\left(\sup\limits_{x\in\mathbb R^d, t\leq r}\Big(\frac{1}{|B(x,t)|}\int_{B(x,t)}|f(y)- e^{-tL} f(y)|^2 dy\Big)^{1/2}\right),$$
$$\gamma_2(f)=\lim\limits_{R\to \infty}\left(\sup\limits_{x\in\mathbb R^d, t\geq R}\Big(\frac{1}{|B(x,t)|}\int_{B(x,t)}|f(y)- e^{-tL} f(y)|^2 dy\Big)^{1/2}\right),$$
$$\gamma_3(f)=\lim\limits_{R\to \infty}\left(\sup\limits_{B(x,t)\cap B(0,R)=\emptyset}\Big(\frac{1}{|B(x,t)|}\int_{B(x,t)}|f(y)- e^{-tL} f(y)|^2 dy\Big)^{1/2}\right).$$
The authors  in \cite{DDSTY} further showed that  $H^1_L(\mathbb R^d)$ is in fact the dual of $VMO_L(\mathbb R^d)$, which allows us to study  the weak$^*$-convergence  in  $H^1_L(\mathbb R^d)$. This is useful in the study of the Hardy estimates for commutators of singular integral operators related to $L$, see for example Theorem 7.1 and Theorem 7.3 of \cite{Ky2}.

Our main result is the following theorem.

\begin{theorem}\label{the main theorem}
Suppose that $\{f_j\}_{j\geq 1}$ is a bounded sequence in $H^1_L(\mathbb R^d)$, and that $f_j(x)\to f(x)$ for almost every $x\in\mathbb R^d$. Then, $f\in H^1_L(\mathbb R^d)$ and $\{f_j\}_{j\geq 1}$ weak$^*$-converges to $f$, that is, for every $\varphi\in VMO_L(\mathbb R^d)$, we have
$$\lim_{j\to\infty} \int_{\mathbb R^d} f_j(x) \varphi(x)dx = \int_{\mathbb R^d} f(x) \varphi(x) dx.$$
\end{theorem}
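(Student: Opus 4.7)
My plan is to follow the template of Jones--Journ\'e, using three tools specific to the Schr\"odinger setting: the duality $H^1_L = (VMO_L)^*$ from \cite{DDSTY}, the atomic decomposition of $H^1_L$ from \cite{DZ}, and the Gaussian upper bound on the kernel $K^L_t(x,y)$ of $e^{-tL}$. The argument splits into two independent parts: proving $f \in H^1_L$, and then establishing the weak$^*$ convergence.

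\textbf{Membership via a Fatou argument on $\mathcal{M}_L$.} To show $f \in H^1_L$, it is enough to establish the pointwise bound $\mathcal{M}_L f(x) \le \liminf_j \mathcal{M}_L f_j(x)$ for a.e.\ $x$; then $\|f\|_{H^1_L} = \|\mathcal{M}_L f\|_{L^1} \le \liminf_j \|\mathcal{M}_L f_j\|_{L^1} < \infty$ by Fatou's lemma. The pointwise bound follows once I show $e^{-tL}f_j(x) \to e^{-tL}f(x)$ at every fixed $(t,x)$. Since $|f_j|\le \mathcal{M}_L f_j$ gives $\sup_j\|f_j\|_{L^1}<\infty$, I would split $f_j = f_j\chi_{\{|f_j|\le N\}}+ f_j\chi_{\{|f_j|>N\}}$: dominated convergence handles the bounded piece, while the Gaussian decay of $K^L_t(x,\cdot)$ together with the weak-$(1,1)$ bound on $\mathcal{M}_L$ makes the tail uniformly $O(1/N)$.

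\textbf{Weak$^*$ convergence and the main obstacle.} Using $H^1_L=(VMO_L)^*$ and separability of $VMO_L$, Banach--Alaoglu produces a weak$^*$-convergent subsequence $f_{j_k}\to g\in H^1_L$, and a standard subsequence principle reduces the theorem to showing that every such limit satisfies $g=f$ a.e. For this I would test against $\varphi\in C_c^\infty(\mathbb R^d)$, first verifying $C_c^\infty\subset VMO_L$ by checking directly that $\gamma_i(\varphi)=0$ for $i=1,2,3$: compact support and smoothness dispose of $\gamma_1$, while the Gaussian heat kernel decay combined with $\|\varphi\|_{L^2}<\infty$ handles $\gamma_2$ and $\gamma_3$. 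Weak$^*$ convergence then yields $\int f_{j_k}\varphi \to \int g\varphi$, so it remains to show the independent convergence $\int f_{j_k}\varphi \to \int f\varphi$. This last step is the crux: mere $L^1$-boundedness plus a.e.\ convergence does not suffice (concentrating sequences fail), so one must exploit the $H^1_L$-structure. I would invoke the atomic decomposition of \cite{DZ} to write $f_{j_k}=\sum_n \lambda_{n,k}a_{n,k}$ with $\sum_n|\lambda_{n,k}|\lesssim\|f_{j_k}\|_{H^1_L}$, where each atom $a_{n,k}$ is either a classical mean-zero $H^1$-atom or a \emph{local} atom supported in a ball of radius at most the critical radius $\rho(x_0)$ at its center. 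The classical-atom part is handled by the classical Jones--Journ\'e theorem applied through the continuous embedding $H^1\hookrightarrow H^1_L$ (obtained from a Feynman--Kac domination by the free heat semigroup). The local-atom part, after separating scales, is treated by truncation and dominated convergence, exploiting that only finitely many critical balls intersect $\mathrm{supp}(\varphi)$ at each scale. The careful bookkeeping of this diagonal extraction, together with showing that the tail of the atomic series contributes negligibly after testing against the fixed $\varphi$, is the main technical obstacle.
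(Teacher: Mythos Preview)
Your Banach--Alaoglu reduction is correct, and you have correctly identified the crux: one must show $\int f_{j_k}\varphi\to\int f\varphi$ for every $\varphi\in C_c^\infty(\mathbb R^d)$, using only a.e.\ convergence and $H^1_L$-boundedness. But the method you propose for this step has a genuine gap. You write $f_{j_k}=\sum_n\lambda_{n,k}a_{n,k}$ and plan to handle the ``classical-atom part'' via the classical Jones--Journ\'e theorem. The problem is that the atomic decomposition is not canonical and depends on $k$: the function $g_k:=\sum_{n:\,a_{n,k}\text{ classical}}\lambda_{n,k}a_{n,k}$ is an artifact of the chosen decomposition of $f_{j_k}$, and there is no reason whatsoever for $g_k$ to converge a.e.\ to anything. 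Without a.e.\ convergence of $g_k$, Jones--Journ\'e cannot be invoked. The same obstruction hits the local-atom part. No amount of tail bookkeeping repairs this, because the splitting itself destroys the one hypothesis (pointwise convergence) that separates the theorem from a false statement about merely $L^1$-bounded sequences.

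The paper avoids this by using a \emph{fixed, $j$-independent, linear} decomposition: the partition of unity $\{\psi_{n,k}\}$ subordinate to balls $B(x_{n,k},2^{1-n/2})$ adapted to the critical radius $\rho$ (Lemma~\ref{DZ, Lemma 2.5}). Multiplication by $\psi_{n,k}$ preserves a.e.\ convergence trivially, and $\{\psi_{n,k}f_j\}_j$ is bounded in the local Hardy space $h^1_n$ (Lemma~\ref{DZ, 4.7}), so Dafni's weak$^*$-convergence theorem in $h^1_n$ (Theorem~C) applies to each piece. For $\varphi\in C_c^\infty$, only finitely many $(n,k)$ meet $\mathrm{supp}\,\varphi$ (Corollary~\ref{a consequence of DZ, Lemma 2.3}(i)), so the sum is finite and $\int f_j\varphi\to\int f\varphi$ follows. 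The density of $C_c^\infty$ in $VMO_L$ (Theorem~\ref{density of C^infty in VMO_L}) then upgrades this to all of $VMO_L$. In short, the right localization is by a partition of unity tied to $\rho$, not by a $j$-dependent atomic splitting.

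A secondary issue: your Fatou argument for $f\in H^1_L$ does not work as written. A sequence bounded in $H^1_L$ need not be uniformly integrable (take atoms on shrinking balls), so $\int_{\{|f_j|>N\}}|f_j|$ need not be $o_N(1)$ uniformly in $j$; neither the Gaussian bound on $T_t(x,y)$ nor any weak-$(1,1)$ estimate for $\mathcal M_L$ rescues this. Hence you cannot conclude $e^{-tL}f_j(x)\to e^{-tL}f(x)$ this way. This is not fatal, since---as you already observe---Banach--Alaoglu delivers $g\in H^1_L$ once the crux is settled; but the direct maximal-function route is not available.
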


Throughout the whole paper, $C$ denotes a positive geometric constant which is independent of the main parameters, but may change from line to line.   In $\mathbb R^d$, we denote by $B=B(x,r)$ an open ball with center $x$ and radius $r>0$. For any measurable set $E$, we denote  by $|E|$ its  Lebesgue measure.

The paper is organized as follows. In Section 2, we present some notations and preliminary results. Section 3 is devoted to the proof of Theorem \ref{the main theorem}. In the last section, we prove that $C^\infty_c(\mathbb R^d)$ is dense in the space  $VMO_L(\mathbb R^d)$.

{\bf Acknowledgements.} The author would like to thank  Aline Bonami and  Sandrine Grellier for many helpful suggestions and discussions.

\section{Some preliminaries and notations}\label{Some preliminaries and notations}

In this paper, we consider the Schr\"odinger differential operator
$$L= -\Delta+ V$$
 on $\mathbb R^d$, $d\geq 3$, where $V$ is a nonnegative potential, $V\ne 0$.  As in the works of Dziuba\'nski et al \cite{DGMTZ, DZ}, we always assume that $V$ belongs to the reverse H\"older class $RH_{d/2}$. Recall that a nonnegative locally integrable function $V$ is said to belong to a reverse H\"older class $RH_q$, $1<q<\infty$, if  there exists a constant $C>0$ such that for every ball $B\subset \mathbb R^d$, 
$$\Big(\frac{1}{|B|}\int_B (V(x))^q dx\Big)^{1/q}\leq \frac{C}{|B|}\int_B V(x) dx.$$

Let $\{T_t\}_{t>0}$ be the semigroup generated by $L$ and $T_t(x,y)$ be their kernels. Namely,
$$T_t f(x)=e^{-t L}f(x)=\int_{\mathbb R^d} T_t(x,y)f(y)dy,\quad f\in L^2(\mathbb R^d),\quad t>0.$$
Since $V$ is nonnegative, the Feynman-Kac formula implies that
\begin{equation}\label{the Feynman-Kac formula}
0\leq T_t(x,y)\leq \frac{1}{(4\pi t)^{d/2}} e^{-\frac{|x-y|^2}{4t}}.
\end{equation}

According to \cite{DZ}, the space $H^1_L(\mathbb R^d)$ is defined as the completion of 
$$\{f\in L^2(\mathbb R^d): \mathcal M_Lf\in L^1(\mathbb R^d) \}$$
in the norm 
$$\|f\|_{H^1_L}:= \|\mathcal M_L f\|_{L^1},$$
 where $\mathcal M_L f(x):= \sup_{t>0}|T_t f(x)|$ for all $x\in \mathbb R^d$.

In \cite{DGMTZ} it was shown that the dual space of $H^1_{L}(\mathbb R^d)$ can be identified with the space $BMO_L(\mathbb R^d)$ which consists of all functions $f\in BMO(\mathbb R^d)$ with
\begin{equation}
\|f\|_{BMO_L} := \|f\|_{BMO}+ \sup_{\rho(x)\leq r}\frac{1}{|B(x,r)|}\int_{B(x,r)}|f(y)|dy<\infty,
\end{equation}
where $\rho$ is  the auxiliary function defined as in \cite{Sh}, that is,
\begin{equation}
\rho(x)= \sup\Big\{r>0: \frac{1}{r^{d-2}}\int_{B(x,r)} V(y)dy\leq 1\Big\},
\end{equation}
$x\in \mathbb R^d$. Clearly, $0<\rho(x)<\infty$ for all $x\in \mathbb R^d$, and thus $\mathbb R^d=\bigcup_{n\in\mathbb Z}\mathcal B_n$, where the sets $\mathcal B_n$ are defined by
\begin{equation}
\mathcal B_n= \{x\in \mathbb R^d: 2^{-(n+1)/2}< \rho(x)\leq 2^{-n/2}\}.
\end{equation}

The following fundamental property of the function $\rho$ is due to Shen \cite{Sh}.

\begin{Proposition} [see \cite{Sh}, Lemma 1.4] \label{Shen, Lemma 1.4}
There exist $C_0>1$ and $k_0\geq 1$ such that for all $x,y\in\mathbb R^d$,
$$C_0^{-1}\rho(x) \Big(1+ \frac{|x-y|}{\rho(x)}\Big)^{-k_0}\leq \rho(y)\leq C_0 \rho(x) \Big(1+ \frac{|x-y|}{\rho(x)}\Big)^{\frac{k_0}{k_0+1}}.$$
\end{Proposition}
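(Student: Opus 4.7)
The plan is to follow Shen~\cite{Sh}. Define $\psi(x,r) := r^{-(d-2)}\int_{B(x,r)} V(z)\,dz$, so that by continuity $\rho(x)$ is the unique $r > 0$ with $\psi(x,r) = 1$. The key ingredient is a power-type growth estimate for $\psi$: by Gehring's self-improvement of the reverse Hölder class, $V \in RH_q$ for some $q = d/2 + \epsilon > d/2$, and combining Hölder's inequality with the $RH_q$ bound gives, for $0 < r \leq R$,
\[
\psi(x,r) \leq C(r/R)^{\delta}\psi(x,R), \qquad \delta := 2 - d/q > 0.
\]
This ensures $\psi(x,r)\to 0$ as $r\to 0^+$ and $\psi(x,R)\to\infty$ as $R\to\infty$ (since $V\not\equiv 0$), so $\rho(x) \in (0,\infty)$ is well-defined and finite.

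An immediate consequence is \emph{local comparability}: if $|x-y| \leq \rho(x)$, the inclusion $B(x,\rho(x)) \subset B(y, 2\rho(x))$ yields $\psi(y, 2\rho(x)) \geq 2^{-(d-2)}$, and applying the power-growth estimate to $\psi(y,\cdot)$ forces $\rho(y) \leq C_1\rho(x)$; a symmetric argument in the same regime gives $\rho(y) \geq c_0\rho(x)$ with $c_0 = 1/C_1$. For the global lower bound $\rho(y) \geq C_0^{-1}\rho(x)(1 + |x-y|/\rho(x))^{-k_0}$, I would iterate the local bound along a chain $x = z_0, z_1,\ldots, z_N = y$ with step length $|z_i - z_{i+1}|$ a fixed small fraction of $\rho(z_i)$, getting $\rho(z_{i+1}) \geq c_0 \rho(z_i)$ and $\rho(z_{i+1}) \leq C_1\rho(z_i)$ at each step. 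Bounding the chain length $\sum|z_i - z_{i+1}|$ from above by a geometric series in $C_1$, and counting the steps $N$ needed to cover distance $|x-y|$, leads to $\rho(z_N) \geq c_0^N \rho(x)$ with $N$ controlled by $\log(|x-y|/\rho(x))/\log C_1$, producing the polynomial lower bound with $k_0 := \log(1/c_0)/\log C_1 \geq 1$.

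The matching upper bound with the sub-linear exponent $k_0/(k_0+1)$ then follows from the lower bound by an algebraic bootstrap. Swapping $x$ and $y$ in the lower bound gives $\rho(y)/\rho(x) \leq C(1 + |x-y|/\rho(y))^{k_0}$, which with $u := \rho(y)/\rho(x)$ and $t := |x-y|/\rho(x)$ is the implicit inequality $u \leq C(1 + t/u)^{k_0}$. Separating into the two regimes $u \leq t$ and $u \geq t$ and solving each directly yields $u \leq C'(1 + t)^{k_0/(k_0+1)}$, which is precisely the stated upper bound.

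The main obstacle will be the chain argument for the lower bound. The delicate issue is that the step sizes $\frac{1}{2}\rho(z_i)$ themselves may shrink rapidly along the chain (by up to factor $c_0 < 1$ per step), so one must simultaneously control the step count $N$, the cumulative shrinkage $c_0^N$ of $\rho$, and the constraint that the total chain length equal $|x-y|$. The geometric-series balancing fixing the specific exponent $k_0$ is the heart of the proof; once it is in hand, the algebraic bootstrap in the last step is clean but essential, being exactly where the characteristic sub-linear exponent $k_0/(k_0+1) < 1$ emerges from the ``raw'' polynomial exponent $k_0$ of the lower bound.
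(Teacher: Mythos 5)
The paper does not prove this proposition at all --- it is imported verbatim from Shen's paper (\cite{Sh}, Lemma 1.4) --- so your proposal can only be measured against Shen's argument. Much of your outline is consistent with it: the growth estimate $\psi(x,r)\leq C(r/R)^{\delta}\psi(x,R)$ with $\delta=2-d/q>0$ (via H\"older, the $RH_q$ bound, and Gehring's self-improvement) is exactly Shen's Lemma 1.2; the local comparability $\rho(y)\sim\rho(x)$ for $|x-y|\lesssim\rho(x)$ is right; and the final algebraic bootstrap --- writing the lower bound with $x$ and $y$ swapped as $u\leq C(1+t/u)^{k_0}$ and splitting into $u\leq t$ and $u\geq t$ to extract the exponent $k_0/(k_0+1)$ --- is correct and is indeed where the sub-linear exponent comes from.

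The genuine gap is the chaining argument for the polynomial lower bound. Two-sided local comparability ($c_0\rho(z_i)\leq\rho(z_{i+1})\leq C_1\rho(z_i)$ for steps of length $\tfrac12\rho(z_i)$) is \emph{provably insufficient} to yield any polynomial global bound: the function $\rho(x)=e^{-|x|}$ satisfies local comparability of exactly this form yet decays exponentially, so no argument using only the local bounds can succeed. Concretely, your geometric-series estimate $\sum_i\tfrac12\rho(z_i)\leq\tfrac12\rho(x)\sum_iC_1^i$ forces $N\geq\log(c\,|x-y|/\rho(x))/\log C_1$, a \emph{lower} bound on the step count, whereas to control $c_0^N$ from below you would need an \emph{upper} bound on $N$ --- and in the worst case ($\rho$ shrinking by the factor $c_0$ at every step) the chain covers only the bounded total length $\tfrac12\rho(x)/(1-c_0)$, so $N$ is not controlled by $|x-y|/\rho(x)$ at all. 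Shen avoids chaining entirely: the lower bound is obtained in a single step at the scale $r=|x-y|$, by bounding $\psi(y,2r)\leq(2r)^{-(d-2)}\int_{B(x,3r)}V\leq C(r/\rho(x))^{\mu}(\rho(x)/r)^{d-2}$ using the inclusion $B(y,2r)\subset B(x,3r)$ and the doubling of $V\,dx$ (itself a consequence of $RH_q$), and then transferring this down to small radii $s$ via $\psi(y,s)\leq C(s/2r)^{\delta}\psi(y,2r)$; the largest $s$ for which this stays $\leq1$ gives $\rho(y)\geq c\,\rho(x)(1+r/\rho(x))^{-k_0}$ with $k_0$ determined by $\delta$, $\mu$ and $d$. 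You would need to replace your chain by this global two-scale comparison for the proof to go through.
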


Let $VMO_L(\mathbb R^d)$ be the subspace of $BMO_L(\mathbb R^d)$ consisting of those functions $f$ satisfying $\gamma_1(f)= \gamma_2(f)= \gamma_3(f)=0$, where
$$\gamma_1(f)=\lim\limits_{r\to 0}\left(\sup\limits_{x\in\mathbb R^d, t\leq r}\Big(\frac{1}{|B(x,t)|}\int_{B(x,t)}|f(y)- e^{-tL} f(y)|^2 dy\Big)^{1/2}\right),$$
$$\gamma_2(f)=\lim\limits_{R\to \infty}\left(\sup\limits_{x\in\mathbb R^d, t\geq R}\Big(\frac{1}{|B(x,t)|}\int_{B(x,t)}|f(y)- e^{-tL} f(y)|^2 dy\Big)^{1/2}\right),$$
$$\gamma_3(f)=\lim\limits_{R\to \infty}\left(\sup\limits_{B(x,t)\cap B(0,R)=\emptyset}\Big(\frac{1}{|B(x,t)|}\int_{B(x,t)}|f(y)- e^{-tL} f(y)|^2 dy\Big)^{1/2}\right).$$
In \cite{DDSTY} it was shown  that  $H^1_L(\mathbb R^d)$ is the dual space of  $VMO_L(\mathbb R^d)$.

In the sequel, we denote by $\mathcal C_L$ the $L$-constant
$$\mathcal C_L= 8. 9^{k_0}C_0$$
where $k_0$ and $C_0$ are defined as in Proposition \ref{Shen, Lemma 1.4}. 

Following Dziuba\'nski and Zienkiewicz \cite{DZ}, we define atoms as follows.

\begin{Definition}
Given $1<q\leq \infty$. A function $a$ is called a $(H^1_L,q)$-atom related to the ball $ B(x_0,r)$ if $r\leq  \mathcal C_L \rho(x_0) $ and

i) supp $a\subset B(x_0,r)$,

ii) $\|a\|_{L^q}\leq |B(x_0,r)|^{1/q-1}$,

iii) if $r\leq \frac{1}{\mathcal C_L}\rho(x_0)$ then $\int_{\mathbb R^d}a(x)dx=0$.
\end{Definition}

Then, we have the following atomic characterization of $H^1_L(\mathbb R^d)$.

\begin{theorema}[see \cite{DZ}, Theorem 1.5]\label{DZ, Theorem 1.5}
Let $1<q\leq \infty$. A function $f$ is in $H^1_L(\mathbb R^d)$ if and only if it can be written as $f=\sum_j \lambda_j a_j$, where $a_j$ are $(H^1_L,q)$-atoms and $\sum_j |\lambda_j|<\infty$. Moreover, there exists a constant $C>0$ such that
$$\|f\|_{H^1_L}\leq \inf\left\{\sum_j |\lambda_j|: f=\sum_j \lambda_j a_j\right\}\leq C \|f\|_{H^1_L}.$$
\end{theorema}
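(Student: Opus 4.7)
The plan is to prove the two inequalities $\|f\|_{H^1_L}\leq \inf\sum_j|\lambda_j|\leq C\|f\|_{H^1_L}$ separately, following the template for the classical atomic decomposition of $H^1(\mathbb R^d)$ but adapted to the inhomogeneous geometry induced by the critical radius $\rho$.

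For the first inequality, it suffices to show that every $(H^1_L,q)$-atom $a$ satisfies $\|a\|_{H^1_L}\leq C'$ uniformly; subadditivity of the norm then gives the claim (with the constant $C'$ absorbed into the normalization of atoms). Given such an atom supported in $B=B(x_0,r)$, I would split $\int_{\mathbb R^d}\mathcal M_L a$ into the integrals over $2B$ and $(2B)^c$. On $2B$, H\"older's inequality combined with the $L^q$-boundedness of $\mathcal M_L$ (a consequence of the Gaussian bound \eqref{the Feynman-Kac formula}) and the size condition $\|a\|_{L^q}\leq |B|^{1/q-1}$ handles matters. On $(2B)^c$, I distinguish between the \emph{cancellation regime} $r\leq \mathcal C_L^{-1}\rho(x_0)$, where $\int a=0$ allows a Taylor expansion of the Gaussian kernel and yields the classical decay $\mathcal M_L a(x)\leq C\,r\|a\|_{L^1}|x-x_0|^{-d-1}$, and the \emph{local regime} $\mathcal C_L^{-1}\rho(x_0)<r\leq \mathcal C_L\rho(x_0)$, where cancellation is unavailable and one invokes the genuinely Schr\"odinger improvement: $T_t(x,y)$ decays faster than any power in $|x-y|/\rho(x_0)$ once $t\gtrsim \rho(x_0)^2$, a Fefferman-Phong/Shen type estimate that rests on $V\in RH_{d/2}$. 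Proposition \ref{Shen, Lemma 1.4} then guarantees the slow variation of $\rho$ needed to align the relevant scales across $B$.

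For the converse inequality, given $f\in H^1_L$, I would construct the decomposition through a Calder\'on-Zygmund/Whitney stopping-time argument on the level sets $\Omega_k=\{\mathcal M_L f>2^k\}$. A Whitney cover $\{B_{k,j}\}$ of $\Omega_k$ with radii $r_{k,j}\sim \dist(B_{k,j},\Omega_k^c)$, together with an associated smooth partition of unity, produces the standard good/bad decomposition $f=g_k+\sum_j b_{k,j}$; telescoping in $k$ then expresses $f$ as a norm-convergent sum of building blocks, each supported in a Whitney ball. The additional step dictated by the Schr\"odinger setting is to split the index set according to whether $r_{k,j}\leq c\,\rho(x_{k,j})$, in which case one arranges the building block to have mean zero and obtains a small-scale atom, or $r_{k,j}>c\,\rho(x_{k,j})$, in which case one further subdivides $B_{k,j}$ into finitely many subballs of radii comparable to $\rho$ (using Proposition \ref{Shen, Lemma 1.4}), producing local atoms on which no cancellation is required. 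Standard $L^q$-estimates for the Calder\'on-Zygmund pieces then yield $\sum_{k,j}|\lambda_{k,j}|\leq C\|f\|_{H^1_L}$.

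The main obstacle is the uniform $H^1_L$-bound for local atoms, whose cancellation-free definition forces one to exploit the $L$-specific decay of $T_t(x,y)$ at scales beyond $\rho$; this is the step where the $RH_{d/2}$ hypothesis on $V$ and the Shen-type control in Proposition \ref{Shen, Lemma 1.4} are essential. Once that pointwise kernel estimate is established, the remaining arguments follow familiar real-variable lines from the classical Coifman-Weiss theory.
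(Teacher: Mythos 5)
First, a point of order: the paper does not prove this statement. It is Theorem~1.5 of Dziuba\'nski--Zienkiewicz \cite{DZ}, quoted here as a black box (only Lemmas \ref{DZ, Lemma 2.3}, \ref{DZ, Lemma 2.5}, \ref{DZ, 4.7} and \ref{DZ, Lemma 3.0} from that source are used elsewhere in the text). So there is no in-paper argument to compare yours against; the relevant comparison is with the proof in \cite{DZ} itself.

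Your first half (the uniform bound $\|a\|_{H^1_L}\leq C$ for $(H^1_L,q)$-atoms) is on target, and you correctly isolate the genuinely Schr\"odinger ingredient: for a local atom ($r\sim\rho(x_0)$, no cancellation) the Gaussian bound \eqref{the Feynman-Kac formula} alone only yields $\mathcal M_La(x)\lesssim|x-x_0|^{-d}$ off $2B$, which is not integrable, and one must use the improved decay $T_t(x,y)\lesssim t^{-d/2}\bigl(1+\sqrt{t}/\rho(y)\bigr)^{-N}e^{-c|x-y|^2/t}$ available under $V\in RH_{d/2}$. Your second half, however, takes a genuinely different route from \cite{DZ} and contains a gap. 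Dziuba\'nski and Zienkiewicz do not run a Calder\'on--Zygmund stopping-time argument on $\{\mathcal M_Lf>2^k\}$; they localize with the partition of unity $\{\psi_{n,k}\}$ of Lemma \ref{DZ, Lemma 2.5}, prove $\sum_{n,k}\|\psi_{n,k}f\|_{h^1_n}\lesssim\|f\|_{H^1_L}$ (recorded as Lemma \ref{DZ, 4.7} in this paper, and itself requiring a comparison of $\mathcal M_L$ with the truncated heat maximal functions $\mathcal M_n$ at scale $\rho$), and then apply Goldberg's atomic decomposition of $h^1_n$ to each localized piece --- essentially the content of Lemma \ref{Ky2, Lemma 6.5}. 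In your Whitney construction, for the large balls with $r_{k,j}>c\,\rho(x_{k,j})$ where you forgo cancellation, you need $\|b_{k,j}\|_{L^\infty}\lesssim 2^k$, i.e.\ that averages of $|f|$ over balls of radius $\sim\rho$ are dominated by $\inf_{\Omega_k^c}\mathcal M_Lf$ nearby. Since $\mathcal M_L$ only sees the signed quantities $|T_tf|$ and $T_t$ is not the Hardy--Littlewood kernel, this requires either a lower Gaussian bound for $T_t(x,y)$ at scale $t\sim\rho(x)^2$ or a detour through a grand maximal function; your sketch does not address this step, and it is precisely the difficulty that the localization to the spaces $h^1_n$ in \cite{DZ} is designed to avoid.
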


Let $P(x)= (4\pi)^{-d/2} e^{-|x|^2/4}$ be the Gauss function. According to \cite{DZ},  the space $h^1_n(\mathbb R^d)$, $n\in\mathbb Z$,  denotes the space of all integrable functions $f$ such that
$$\mathcal  M_nf(x) =\sup_{0<t<2^{-n/2}} |P_{t}*f(x)| \in L^1(\mathbb R^d),$$
where $P_{t}(\cdot):=t^{-d}P(t^{-1}\cdot)$. The norm on $h^1_n(\mathbb R^d)$ is then defined by 
$$\|f\|_{h^1_n}:= \|\mathcal M_n f\|_{L^1}.$$

It was shown in \cite{Go} that the dual space of $h^1_n(\mathbb R^d)$ can be identified with $bmo_n(\mathbb R^d)$ the space of all locally integrable functions $f$ such that
$$\|f\|_{bmo_n}= \|f\|_{BMO}+ \sup_{x\in\mathbb R^d, 2^{-n/2}\leq r}\frac{1}{|B(x,r)|}\int_{B(x,r)} |f(y)|dy<\infty.$$

Here and in what follows, for a ball $B$ and a locally integrable function $f$, we denote by $f_B$ the average of $f$ on B. Following  Dafni \cite{Da2}, we define $vmo_n(\mathbb R^d)$ as the subspace of $bmo_n(\mathbb R^d)$ consisting of those $f$ such that
$$\lim_{\sigma\to 0}\left( \sup_{x\in\mathbb R^d, r<\sigma} \frac{1}{|B(x,r)|}\int_{B(x,r)}|f(y)- f_{B(x,r)}|dy\right)=0$$
and
$$\lim_{R\to \infty}\left( \sup_{  B(x,r)\cap B(0,R)=\emptyset,  r\geq 2^{-n/2}}\frac{1}{|B(x,r)|}\int_{B(x,r)}|f(y)|dy\right)=0.$$

Recall that $C^\infty_c(\mathbb R^d)$  is the space of all $C^\infty$-functions with compact support. Then, the following was established  by  Dafni \cite{Da2}.

\begin{theoremb}[see \cite{Da2}, Theorem 6 and Theorem 9]\label{Dafni 2}
Let $n\in \mathbb Z$. Then,

i) The space $vmo_n(\mathbb R^d)$ is the closure of $C^\infty_c(\mathbb R^d)$ in $bmo_n(\mathbb R^d)$.

ii) The dual of $vmo_n(\mathbb R^d)$ is the space $h^1_n(\mathbb R^d)$.

\end{theoremb}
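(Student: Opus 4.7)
My plan has two parts, one for each assertion of Theorem B. For part (i), the easy inclusion $C_c^\infty(\mathbb R^d) \subseteq vmo_n(\mathbb R^d)$ follows by direct verification: for $\varphi \in C_c^\infty$, uniform continuity sends the first defining limit of $vmo_n$ to $0$, and compact support makes the second defining limit trivially vanish as soon as $R > \mathrm{diam}(\mathrm{supp}\,\varphi)$. Since both defining limits depend continuously on $f$ in $bmo_n$-norm, $vmo_n$ is $bmo_n$-closed, so the closure of $C_c^\infty$ lies in $vmo_n$. For the reverse, given $f \in vmo_n$, I would produce a $C_c^\infty$-approximant by truncation followed by mollification: with a smooth cutoff $\chi_R(x) = \chi(x/R)$, $\chi \in C_c^\infty$, $\chi \equiv 1$ on $B(0,1)$, and a standard $C_c^\infty$ mollifier $\psi_\epsilon$, the family $(f\chi_R) * \psi_\epsilon$ converges to $f$ in $bmo_n$ upon sending $R \to \infty$ and then $\epsilon \to 0$. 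The truncation error is controlled by the second defining limit of $vmo_n$ combined with the large-ball term in $\|f\|_{bmo_n}$, and the mollification error by the first defining limit (since convolution on scale $\epsilon$ is dominated by oscillation on balls of radius $\sim \epsilon$).

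The direction $h^1_n \hookrightarrow (vmo_n)^*$ of part (ii) is a standard atomic argument. Decomposing $f \in h^1_n$ into local atoms (the analogue for $h^1_n$ of the atomic decomposition in Theorem A, with threshold $2^{-n/2}$ in place of $\mathcal C_L \rho(x_0)$, cf.\ Goldberg), I reduce to estimating $|\int a\,g|$ for a single atom $a$ supported on $B$. If $r(B) \le 2^{-n/2}$, then $a$ has mean zero and
$$\Bigl|\int a\,g\Bigr| = \Bigl|\int a\,(g - g_B)\Bigr| \le \|a\|_{L^\infty}\int_B |g - g_B|\,dy \le \|g\|_{BMO};$$
if $r(B) > 2^{-n/2}$, then $a$ has no moment condition, but the second summand of $\|g\|_{bmo_n}$ yields $|\int a\,g| \le \|a\|_{L^\infty}\int_B |g|\,dy \le \|g\|_{bmo_n}$. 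Summing over the atomic decomposition gives $|\int f\,g| \le C\|f\|_{h^1_n}\|g\|_{bmo_n}$, hence the bounded embedding.

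The main obstacle is the reverse inclusion: producing $f \in h^1_n$ representing a given $\ell \in (vmo_n)^*$. My plan is to adapt the Coifman--Weiss proof of $(VMO)^* = H^1$ to the local setting. Having part (i) in hand, $vmo_n$ is separable, and one isometrically embeds it into a countable direct sum $X$ of $L^2$-type factors (one per ball in a dense countable family), the factors encoding $|B|^{-1/2}(f - f_B)\chi_B$ on balls with $r(B) \le 2^{-n/2}$ and $|B|^{-1/2} f\chi_B$ on balls with $r(B) > 2^{-n/2}$, with direct-sum weights selected so the $X$-norm matches $\|f\|_{bmo_n}$. Hahn--Banach extends $\ell$ to $\tilde\ell \in X^*$, and Riesz representation on each factor produces a consistent family of local $L^2$-data that patch into a single $f \in L^1_{\mathrm{loc}}(\mathbb R^d)$ with $\ell(\varphi) = \int f\varphi$ for every $\varphi \in C_c^\infty$. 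The final (and most delicate) step is to show $f \in h^1_n$ with $\|f\|_{h^1_n} \lesssim \|\ell\|_{(vmo_n)^*}$; I would do this by testing $\ell$ against normalized $(h^1_n,\infty)$-atoms and appealing to the atomic characterization of $h^1_n$, so that the uniform bound on $|\ell(a)|$ over atoms $a$ converts directly into control of the atomic norm of $f$, closing the duality.
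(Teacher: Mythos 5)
This statement is not proved in the paper at all: it is imported verbatim from Dafni \cite{Da2} (Theorems 6 and 9), so there is no internal argument to compare yours against. Taken on its own terms, your treatment of part (i) and of the embedding $h^1_n(\mathbb R^d)\hookrightarrow (vmo_n(\mathbb R^d))^*$ follows the standard route and is essentially sound, modulo the usual care: the truncation step needs the two limits interleaved (a ball of radius between $\sigma$ and $2^{-n/2}$ lying far from the origin is controlled by the second $vmo_n$ condition only after the small-scale threshold $\sigma$ has been fixed, at the cost of a factor $(2^{-n/2}/\sigma)^d$), and the product rule for $f\chi_R$ in $bmo_n$ requires the logarithmic bound on averages of $|f|$ over small balls.

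The genuine gap is in the final step of the converse direction of (ii). Testing $\ell$ against $(h^1_n,\infty)$-atoms cannot ``convert into control of the atomic norm of $f$'': a uniform bound on $|\int af|$ over all such atoms $a$ is exactly the atomic characterization of the \emph{dual} of $h^1_n(\mathbb R^d)$, i.e.\ it yields $f\in bmo_n(\mathbb R^d)$ with $\|f\|_{bmo_n}\lesssim\|\ell\|$ --- the wrong conclusion (and, in addition, a generic bounded atom need not lie in $vmo_n(\mathbb R^d)$, so $\ell(a)$ is not even defined without further argument). To conclude $f\in h^1_n(\mathbb R^d)$ you must \emph{exhibit} a decomposition $f=\sum_j\lambda_ja_j$ with $\sum_j|\lambda_j|\lesssim\|\ell\|$, and in the Coifman--Weiss/Dafni scheme this decomposition is precisely what the Hahn--Banach extension already hands you: the embedding of $vmo_n$ goes into a $c_0$-type direct sum of $L^2$-factors indexed by a countable family of balls, its dual is the corresponding $\ell^1$-sum, so Riesz representation on each factor produces $g_B\in L^2(B)$ with $\sum_B |B|^{1/2}\|g_B\|_{L^2}\lesssim\|\tilde\ell\|$, and the suitably normalized functions $|B|^{-1/2}(g_B-(g_B)_B)\chi_B$ (with the mean subtracted only when $r(B)\le 2^{-n/2}$) are multiples of $(h^1_n,2)$-atoms. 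The ``patching'' of the local data and the construction of the atomic decomposition are one and the same step; your proposal separates them and then replaces the second by an argument that proves membership in the wrong space.
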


Furthermore, the weak$^*$-convergence is true in $h^1_n(\mathbb R^d)$.

\begin{theoremc}[see \cite{Da2}, Theorem 11]
Let $n\in\mathbb Z$. Suppose that $\{f_j\}_{j\geq 1}$ is a bounded sequence in $h^1_n(\mathbb R^d)$, and that $f_j(x)\to f(x)$ for  almost every $x\in\mathbb R^d$. Then, $f\in h^1_n(\mathbb R^d)$ and $\{f_j\}_{j \geq 1}$ weak$^*$-converges to $f$, that is, for every $\varphi\in vmo_n(\mathbb R^d)$, we have
$$\lim_{j\to\infty} \int_{\mathbb R^d} f_j(x) \varphi(x)dx = \int_{\mathbb R^d} f(x) \varphi(x) dx.$$
\end{theoremc}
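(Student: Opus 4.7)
The plan is to follow the classical Jones--Journ\'e scheme adapted to the local Hardy space $h^1_n(\mathbb R^d)$: first establish that the a.e.\ limit $f$ again lies in $h^1_n$, and then upgrade a.e.\ convergence to weak$^*$-convergence against $vmo_n$ via a density-plus-duality argument.

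\textbf{Step 1 (Membership $f \in h^1_n$).} From $|f_j(y)| \le \mathcal M_n f_j(y)$ a.e.\ one has $\|f_j\|_{L^1} \le \|f_j\|_{h^1_n} \le C$, so Fatou's lemma gives $f \in L^1$. To show $f \in h^1_n$, I would establish the pointwise Fatou-type inequality
\[
\mathcal M_n f(x) \le \liminf_{j \to \infty} \mathcal M_n f_j(x) \qquad \text{for a.e. } x,
\]
after which Fatou's lemma produces $\|\mathcal M_n f\|_{L^1} \le \liminf_j \|f_j\|_{h^1_n} < \infty$. This reduces to showing, for each fixed $t \in (0, 2^{-n/2})$ and $x \in \mathbb R^d$, the pointwise convergence $P_t * f_j(x) \to P_t * f(x)$ as $j \to \infty$.

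\textbf{Step 2 (Weak$^*$-convergence).} Fix $\varphi \in vmo_n$ and $\varepsilon > 0$. By Theorem~B(i), choose $\psi \in C_c^\infty(\mathbb R^d)$ with $\|\varphi - \psi\|_{bmo_n} < \varepsilon$, and split
\[
\int (f_j - f)\,\varphi\, dx = \int (f_j - f)(\varphi - \psi)\, dx + \int (f_j - f)\,\psi\, dx.
\]
By the duality $h^1_n = (vmo_n)^*$ (Theorem~B(ii)) and the bound from Step~1, the first term is at most $(\|f_j\|_{h^1_n} + \|f\|_{h^1_n})\,\|\varphi - \psi\|_{bmo_n} \le C\varepsilon$. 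For the second, use the Gaussian smoothing identity (for any $t\in(0,2^{-n/2})$)
\[
\int (f_j - f)\,\psi = \int (f_j - f)(\psi - P_t \psi) + \int (P_t * f_j - P_t * f)\,\psi,
\]
where self-adjointness of $P_t$ transfers the convolution onto $f_j - f$. A Taylor expansion (together with $\int y\, P_t(y)\, dy = 0$) yields $\|\psi - P_t \psi\|_\infty \le C t\,\|\nabla^2 \psi\|_\infty$, so the first piece is $O(t)$ uniformly in $j$. For the second piece, combine the uniform bound $\|P_t * f_j\|_\infty \le (4\pi t)^{-d/2}\,\|f_j\|_{L^1} \le C(t)$ with the pointwise convergence $P_t * f_j \to P_t * f$ from Step~1; dominated convergence on the compact support of $\psi$ gives $\int (P_t * f_j - P_t * f)\,\psi \to 0$ as $j \to \infty$. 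Taking $\limsup_j$ first and then $t \downarrow 0$ concludes.

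\textbf{Main obstacle.} The delicate point is the pointwise convergence $P_t * f_j(x) \to P_t * f(x)$ underlying Step~1, since a.e.\ convergence together with $L^1$-boundedness alone is not sufficient (classical counterexamples such as $j\mathbf{1}_{[0,1/j]}$ illustrate this). Gaussian decay of $P_t(x-\cdot)$ controls the contribution from $|y|>R$ uniformly in $j$, reducing matters to a bounded region $B(0,R)$. There, the atomic decomposition available for $h^1_n$-bounded sequences, together with Egorov's theorem and the smoothness of the kernel $P_t$, rule out the pathologies that break convergence in the purely $L^1$ setting and deliver the required limit. This is the technical heart of the proof and parallels Dafni's treatment of the local $hmo/vmo$-duality.
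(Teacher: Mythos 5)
First, a point of reference: the paper does not prove Theorem~C at all --- it is quoted verbatim from Dafni \cite{Da2}, Theorem~11, and used as a black box in the proof of the main theorem. So there is no in-paper argument to compare yours against, and your attempt must be judged on its own. The soft parts of your outline are correct and standard: the Fatou argument giving $\|\mathcal M_n f\|_{L^1}\le\liminf_j\|\mathcal M_n f_j\|_{L^1}$ once one knows $P_t*f_j(x)\to P_t*f(x)$; the density of $C^\infty_c(\mathbb R^d)$ in $vmo_n(\mathbb R^d)$ combined with the duality bound to dispose of $\varphi-\psi$; and the splitting $\int(f_j-f)\psi=\int(f_j-f)(\psi-P_t*\psi)+\int(P_t*f_j-P_t*f)\,\psi$ with the $O(t^2)$ estimate on $\|\psi-P_t*\psi\|_{L^\infty}$. (Minor caveat: the pairing $\int(f_j-f)(\varphi-\psi)\,dx$ with an unbounded $bmo_n$ function is not an absolutely convergent integral and should be interpreted via the duality, but this is routine.)

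The genuine gap is exactly the step you flag as the ``main obstacle'': the pointwise convergence $P_t*f_j(x)\to P_t*f(x)$ for fixed $t\in(0,2^{-n/2})$ and $x$. You offer only a list of ingredients (``atomic decomposition \dots Egorov \dots smoothness of the kernel'') rather than an argument, and this claim is where the entire content of the theorem lives. Indeed, since $P_t(x-\cdot)$ is a bounded continuous function vanishing at infinity, it is an $L^\infty$-limit, hence a $bmo_n$-limit, of $C^\infty_c$ functions, so $P_t(x-\cdot)\in vmo_n(\mathbb R^d)$; your key claim is therefore literally an instance of the theorem being proved, tested against the particular family $\varphi=P_t(x-\cdot)$. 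Your proof thus reduces the theorem to a special case of itself and stops. The special case is not easier: one must rule out loss of mass by concentration, and the only available tool beyond $L^1$-boundedness is the uniform $h^1_n$ bound, which enters through the cancellation of small-scale atoms. The naive Egorov route you gesture at does not obviously close this: after removing an exceptional set $E$ of small measure, one is left with $\int_E f_j\phi$, which cannot be controlled by uniform integrability ($h^1_n$-bounded sequences are not uniformly integrable), while the cancellation that neutralizes concentrating atoms is destroyed once $\phi$ is multiplied by the rough indicator $\mathbf 1_E$. Making this work is precisely the technical mechanism of Jones--Journ\'e \cite{JJ} and of Dafni's local version \cite{Da2}, and as written your proposal asserts, rather than proves, the heart of the matter.
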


\section{Proof of Theorem \ref{the main theorem}}

We begin by recalling the following two lemmas due to \cite{DZ}. These two lemmas together with Proposition \ref{Shen, Lemma 1.4} play an important role in our study.

\begin{Lemma}[see \cite{DZ}, Lemma 2.3] \label{DZ, Lemma 2.3}
There exists a constant $C>0$ and a collection of balls $B_{n,k}= B(x_{n,k}, 2^{-n/2})$, $n\in\mathbb Z, k=1,2,...$, such that $x_{n,k}\in \mathcal B_n$, $\mathcal B_n\subset \bigcup_k B_{n,k}$, and 
$$card \, \{(n',k'): B(x_{n,k}, R 2^{-n/2})\cap B(x_{n',k'}, R 2^{-n/2})\ne \emptyset\}\leq R^C$$
 for all $n,k$ and $R\geq 2$.
\end{Lemma}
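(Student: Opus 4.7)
My plan is to split the proof into two steps: a Vitali-type construction of the centers $x_{n,k}$, then a counting argument for the overlap based on Proposition \ref{Shen, Lemma 1.4}.

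For each $n\in\mathbb{Z}$ with $\mathcal{B}_n\neq\emptyset$, I would choose $\{x_{n,k}\}_k\subset\mathcal{B}_n$ to be a maximal family (via Zorn's lemma) whose half-balls $B(x_{n,k},2^{-n/2}/2)$ are pairwise disjoint. Such a family is at most countable by $\sigma$-finiteness of Lebesgue measure, and maximality forces every $y\in\mathcal{B}_n$ to lie within distance $2^{-n/2}$ of some $x_{n,k}$; hence $\mathcal{B}_n\subset\bigcup_k B_{n,k}$, as required.

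For the overlap bound, fix $n,k$ and $R\geq 2$ and suppose $B(x_{n,k},R\,2^{-n/2})\cap B(x_{n',k'},R\,2^{-n'/2})\neq\emptyset$, so by the triangle inequality $|x_{n,k}-x_{n',k'}|<2R\cdot 2^{-\min(n,n')/2}$. Write $m=\min(n,n')$, $M=\max(n,n')$, and let $x_{\min}$ denote the one of the two centers carrying index $m$ (the one with the larger $\rho$-value), $x_{\max}$ the other. Since $x_{\min}\in\mathcal{B}_m$ satisfies $\rho(x_{\min})>2^{-(m+1)/2}$, the ratio $|x_{\min}-x_{\max}|/\rho(x_{\min})$ is at most $2\sqrt{2}\,R$. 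Plugging this into the lower-bound half of Proposition \ref{Shen, Lemma 1.4} applied at $x_{\min}$, and combining with $\rho(x_{\max})\leq 2^{-M/2}$, yields $2^{(M-m)/2}\leq C(1+R)^{k_0}$, whence $|n-n'|\lesssim \log R$; so at most $O(\log R)$ values of $n'$ are admissible.

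For each admissible $n'$, the pairwise-disjoint balls $B(x_{n',k'},2^{-n'/2}/2)$ attached to admissible $k'$ all lie inside a ball centered at $x_{n,k}$ of radius $\lesssim R\cdot 2^{-m/2}$, so a straightforward volume comparison bounds the number of such $k'$ by $\lesssim R^d\cdot 2^{d(M-m)/2}\lesssim R^{d(1+k_0)}$, using the $|n-n'|$ estimate from the previous step. Multiplying by the number of admissible $n'$ gives a total of $\lesssim R^{d(1+k_0)}\log R$, which is at most $R^C$ for any fixed $C>d(1+k_0)$ once one absorbs $\log R$ into an extra power of $R$ via $R\geq 2$. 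The main delicate point, which I would double-check carefully, is to apply Shen's inequality at $x_{\min}$ (the center with the larger $\rho$): at $x_{\max}$ the ratio $|x_{\min}-x_{\max}|/\rho(x_{\max})$ grows with $|n-n'|$ and the unfavorable exponent $k_0/(k_0+1)$ on the upper-bound side would fail to give a useful estimate.
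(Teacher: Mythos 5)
The paper offers no proof of this lemma: it is imported verbatim from Dziuba\'nski and Zienkiewicz \cite{DZ}, Lemma 2.3, so there is nothing in the text to compare your argument against. On its own terms your proof is correct and is essentially the standard argument. The Vitali-type selection (a maximal family of centers in $\mathcal B_n$ with pairwise disjoint half-balls) gives countability and the covering $\mathcal B_n\subset\bigcup_k B_{n,k}$; for the overlap count, applying the lower-bound half of Proposition \ref{Shen, Lemma 1.4} at the center with the larger $\rho$-value correctly yields $2^{(M-m)/2}\leq CR^{k_0}$, hence $|n-n'|\lesssim\log R$, and the volume-packing count of the disjoint half-balls for each admissible $n'$ gives $\lesssim R^{d(1+k_0)}$ admissible $k'$; absorbing the logarithm and the multiplicative constants into a power of $R$ is legitimate since $R\geq 2$. (You read the second radius as $R2^{-n'/2}$, as in the original \cite{DZ}; the paper's $R2^{-n/2}$ is presumably a typo, and your argument handles either version since $2^{-n/2}\leq 2^{-m/2}$.) Two minor quibbles. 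First, the final exponent must also absorb the multiplicative constant, so ``any fixed $C>d(1+k_0)$'' should read ``some sufficiently large $C$.'' Second, your closing caveat is unnecessarily pessimistic: applying the upper-bound half of Shen's inequality at $x_{\max}$ also works, since it gives $2^{(M-m)/2}\leq C(R\,2^{(M-m)/2})^{k_0/(k_0+1)}$, and the exponent $1/(k_0+1)>0$ remaining on the left still forces $2^{(M-m)/2}\leq C R^{k_0}$. Neither point affects the validity of the proof.
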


\begin{Lemma}[see \cite{DZ}, Lemma 2.5]\label{DZ, Lemma 2.5}
There are nonnegative $C^\infty$-functions $\psi_{n,k}$, $n\in\mathbb Z, k=1,2,...$, supported in the balls $B(x_{n,k}, 2^{1-n/2})$ such that
$$\sum_{n,k}\psi_{n,k}=1\quad\mbox{and}\quad \|\nabla \psi_{n,k}\|_{L^\infty}\leq C 2^{n/2}.$$
\end{Lemma}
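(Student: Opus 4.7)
The plan is to produce the $\psi_{n,k}$ by the standard partition-of-unity recipe: rescale a fixed bump, sum the rescaled bumps, and normalize. Fix once and for all a nonnegative $\phi\in C_c^\infty(\mathbb R^d)$ with $\phi\equiv 1$ on $B(0,1)$, $\mathrm{supp}\,\phi\subset B(0,2)$ and $\|\nabla\phi\|_\infty\leq C$, and set $\varphi_{n,k}(x):=\phi(2^{n/2}(x-x_{n,k}))$. Then $\varphi_{n,k}\geq 0$ is supported in $B(x_{n,k},2^{1-n/2})$, equals $1$ on $B_{n,k}$, and satisfies $\|\nabla\varphi_{n,k}\|_\infty\leq C 2^{n/2}$.

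The crux is to show that $S:=\sum_{n,k}\varphi_{n,k}$ is locally finite and satisfies $1\leq S\leq C$ on $\mathbb R^d$. The lower bound is immediate: every $x\in\mathbb R^d$ lies in some $\mathcal B_n\subset\bigcup_k B_{n,k}$, so at least one $\varphi_{n,k}$ equals $1$ at $x$. For the upper bound, suppose $x\in B(x_{n,k},2^{1-n/2})$. Since $x_{n,k}\in\mathcal B_n$, one has $\rho(x_{n,k})\in(2^{-(n+1)/2},2^{-n/2}]$, so $|x-x_{n,k}|/\rho(x_{n,k})\leq 2^{3/2}$. Proposition \ref{Shen, Lemma 1.4} then yields $\rho(x)\asymp 2^{-n/2}$ with constants depending only on $C_0,k_0$. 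Applied now to any other $(n',k')$ with $x\in\mathrm{supp}\,\varphi_{n',k'}$, the same estimate gives $\rho(x)\asymp 2^{-n'/2}$, forcing $|n-n'|\leq C_1$ for some fixed $C_1$. Consequently every admissible center $x_{n',k'}$ lies within distance $\leq 2^{1-n/2}+2^{1-n'/2}\leq C_2\, 2^{-n/2}$ of $x_{n,k}$, and Lemma \ref{DZ, Lemma 2.3} (taken with $R$ such that $2R>C_2$) bounds the total number of such pairs $(n',k')$ by $R^C$. Hence $S\leq R^C$ and only boundedly many terms are nonzero near $x$, so $S\in C^\infty(\mathbb R^d)$.

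Define $\psi_{n,k}:=\varphi_{n,k}/S$. Nonnegativity, the support condition and $\sum_{n,k}\psi_{n,k}\equiv 1$ are immediate. For the gradient bound, differentiate:
$$\nabla\psi_{n,k}=S^{-1}\nabla\varphi_{n,k}-S^{-2}\varphi_{n,k}\nabla S.$$
Since $S\geq 1$ and $0\leq\varphi_{n,k}\leq 1$, it suffices to bound $\|\nabla\varphi_{n,k}\|_\infty$ and $\|\nabla S\|_{L^\infty(\mathrm{supp}\,\varphi_{n,k})}$. The first is $\leq C\, 2^{n/2}$ by construction. For $\nabla S=\sum_{n',k'}\nabla\varphi_{n',k'}$, the analysis above shows that on $\mathrm{supp}\,\varphi_{n,k}$ only boundedly many terms contribute, each with $\|\nabla\varphi_{n',k'}\|_\infty\leq C\, 2^{n'/2}\leq C'\, 2^{n/2}$ (using $|n-n'|\leq C_1$), so $|\nabla S|\leq C''\, 2^{n/2}$ there. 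This yields $\|\nabla\psi_{n,k}\|_\infty\leq C\, 2^{n/2}$.

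I expect the main obstacle to be the upper bound $S\leq C$: it is what couples the two-level overlap control, namely the single-scale bounded overlap from Lemma \ref{DZ, Lemma 2.3} with the cross-scale control given by Proposition \ref{Shen, Lemma 1.4}, which says that at any fixed point only boundedly many scales $n'$ can contribute. Once this uniform bound on $S$ and the accompanying local finiteness are in place, the smoothness, support and gradient estimates reduce to a routine computation.
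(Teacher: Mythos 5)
This lemma is quoted from \cite{DZ} and the paper gives no proof of its own, so there is nothing internal to compare against; your construction — rescaled bumps $\varphi_{n,k}$ adapted to $B(x_{n,k},2^{1-n/2})$, the two-sided bound $1\leq S\leq C$ for $S=\sum\varphi_{n,k}$, and normalization $\psi_{n,k}=\varphi_{n,k}/S$ — is the standard argument behind Lemma \ref{DZ, Lemma 2.5} and is correct. The key step is exactly the one you isolate: Proposition \ref{Shen, Lemma 1.4} pins $\rho(x)\asymp 2^{-n/2}$ on $\mathrm{supp}\,\varphi_{n,k}$, which forces $|n-n'|\leq C$ for any other contributing index and places all such centers within $C2^{-n/2}$ of $x_{n,k}$, so the overlap count of Lemma \ref{DZ, Lemma 2.3} bounds both $S$ and $|\nabla S|$ on $\mathrm{supp}\,\varphi_{n,k}$ by $C$ and $C2^{n/2}$ respectively, and the quotient rule then gives $\|\nabla\psi_{n,k}\|_{L^\infty}\leq C2^{n/2}$.
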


The following corollary is useful, its proof follows directly from Lemma \ref{DZ, Lemma 2.3}. We omit the details here (see also Corollary 1 of \cite{DGMTZ}).

\begin{Corollary}\label{a consequence of DZ, Lemma 2.3}
i) Let $\mathbb K$ be a compact set. Then, there exists a finite set $\Gamma\subset\mathbb Z\times \mathbb Z^+$ such that $\mathbb K\cap B(x_{n,k},2^{1-n/2})=\emptyset$ whenever $(n,k)\notin \Gamma$.

ii) There exists a constant $C>0$ such that for every $x\in \mathbb R^d$,
$$card \; \{(n,k)\in \mathbb Z\times \mathbb Z^+: B(x_{n,k}, 2^{1-n/2})\cap B(x, 2\rho(x))\ne \emptyset\}\leq C.$$

iii) There exists a constant $C>0$ such that for every ball $B(x,r)$ with $\rho(x)\leq r$, we have
$$|B(x,r)|\leq \sum_{B(x_{n,k},2^{-n/2})\cap B(x,r)\ne \emptyset}|B(x_{n,k}, 2^{-n/2})|\leq C |B(x,r)|.$$
\end{Corollary}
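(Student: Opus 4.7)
The three parts share one underlying principle, which I will call the \emph{scale-rigidity principle}: by Proposition~\ref{Shen, Lemma 1.4}, whenever $|y-x_{n,k}|\leq C\cdot 2^{-n/2}$ with $C$ a fixed constant, the ratio $|y-x_{n,k}|/\rho(x_{n,k})$ is bounded by a universal constant (since $\rho(x_{n,k})\sim 2^{-n/2}$ because $x_{n,k}\in\mathcal{B}_n$), and consequently Shen's two-sided inequality forces $\rho(y)\sim 2^{-n/2}$ with constants depending only on $C_0$ and $k_0$. I will invoke this throughout to pin down the feasible values of $n$, after which Lemma~\ref{DZ, Lemma 2.3} supplies the count over $k$.

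For part (i), I would first use Proposition~\ref{Shen, Lemma 1.4} with a fixed reference point $x_0\in\mathbb{K}$ to deduce that $\rho$ is bounded above and below by positive finite constants $\rho_-,\rho_+$ on the compact set $\mathbb{K}$. If $B(x_{n,k},2^{1-n/2})$ meets $\mathbb{K}$, choose $y$ in the intersection; the scale-rigidity principle gives $\rho(y)\sim 2^{-n/2}$, so $\rho_-\leq\rho(y)\leq\rho_+$ forces $n$ into a bounded interval. For each of these finitely many $n$, the centers $x_{n,k}$ in question lie in a bounded $2^{1-n/2}$-neighborhood of $\mathbb{K}$; Lemma~\ref{DZ, Lemma 2.3} applied with $R=2$ yields bounded overlap of $\{B_{n,k}\}_k$ at fixed $n$, hence only finitely many can be accommodated in a bounded region.

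For part (ii), I apply the principle twice: for any $y\in B(x,2\rho(x))$ we have $|y-x|/\rho(x)\leq 2$, so Shen gives $\rho(y)\sim\rho(x)$; if additionally $y\in B(x_{n,k},2^{1-n/2})$, the principle gives $\rho(y)\sim 2^{-n/2}$. Combining, $2^{-n/2}\sim\rho(x)$, which confines $n$ to finitely many values. For each feasible $n$, every admissible center $x_{n,k}$ sits inside $B(x,A\rho(x))$ for a fixed constant $A$; thus any two such centers lie within $O(2^{-n/2})$ of each other, and Lemma~\ref{DZ, Lemma 2.3} applied with $R$ a suitably large fixed constant bounds the number of $k$.

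For part (iii), the lower bound is immediate: since $\{B_{n,k}\}_{n,k}$ covers $\mathbb{R}^d$, any ball of the collection that covers a point of $B(x,r)$ automatically meets $B(x,r)$, giving $|B(x,r)|\leq\sum_{B_{n,k}\cap B(x,r)\neq\emptyset}|B_{n,k}|$. The upper bound is where I expect the main obstacle, and it is precisely here that the hypothesis $\rho(x)\leq r$ is indispensable: I must cap the scale $2^{-n/2}$ of every contributing ball. My plan is to take $y\in B_{n,k}\cap B(x,r)$, note $\rho(y)\sim 2^{-n/2}$ by the principle, and estimate via Shen using $|y-x|<r$ together with $\rho(x)\leq r$ (so $1+r/\rho(x)\leq 2r/\rho(x)$):
$$\rho(y)\leq C_0\rho(x)\bigl(1+|y-x|/\rho(x)\bigr)^{k_0/(k_0+1)}\leq C\rho(x)^{1/(k_0+1)}r^{k_0/(k_0+1)}\leq Cr,$$
whence $2^{-n/2}\leq Cr$ and every contributing $B_{n,k}$ lies in $B(x,(1+C)r)$. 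The scale-rigidity principle also delivers bounded overlap of the \emph{full} collection $\{B_{n,k}\}_{n,k}$ across all scales (for each $y$, only $n$ with $2^{-n/2}\sim\rho(y)$ can give $y\in B_{n,k}$, a bounded range, and at each such $n$ Lemma~\ref{DZ, Lemma 2.3} bounds the $k$-count). Integrating this bounded overlap over $B(x,(1+C)r)$ then yields $\sum|B_{n,k}|\leq C'|B(x,r)|$.
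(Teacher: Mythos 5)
Your proof is correct, and it uses exactly the ingredients the paper points to for this corollary (the finite-overlap property of Lemma \ref{DZ, Lemma 2.3} combined with the scale comparison of Proposition \ref{Shen, Lemma 1.4} via $\rho(x_{n,k})\sim 2^{-n/2}$); the paper itself omits the details, and your argument supplies them along the intended lines.
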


The key point in the proof of Theorem \ref{the main theorem} is the following result that we will prove in the last section. 

\begin{Theorem}\label{density of C^infty in VMO_L}
The space $C^\infty_c(\mathbb R^d)$ is dense in the space $VMO_L(\mathbb R^d)$.
\end{Theorem}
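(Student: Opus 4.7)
The strategy is to prove density in two stages, combining the Schr\"odinger-adapted partition of unity $\{\psi_{n,k}\}$ from Lemma \ref{DZ, Lemma 2.5} with Dafni's density theorem for the localized space $vmo_n$ (Theorem B).

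\emph{Stage 1: reduction to compactly supported functions.} Given $f\in VMO_L(\mathbb R^d)$, I would exhaust $\mathbb Z\times\mathbb Z^+$ by finite subsets $\Gamma_N$ and set $f_N=\sum_{(n,k)\in\Gamma_N}f\psi_{n,k}$, so that each $f_N$ has compact support. The aim is $\|f-f_N\|_{BMO_L}\to 0$. For the $BMO$ part, Corollary \ref{a consequence of DZ, Lemma 2.3}(i) ensures that $\mathrm{supp}(f-f_N)$ escapes to infinity; on balls contained in this exterior I would compare $f_B$ with $e^{-tL}f$ through the Feynman--Kac bound (2.1) and then invoke $\gamma_3(f)=0$. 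For the local-integrability term in $\|\cdot\|_{BMO_L}$ (averages over $B(x,r)$ with $\rho(x)\leq r$), balls with $x$ far from the origin are again controlled by $\gamma_3$, while balls of moderate radius make use of $\gamma_2(f)=0$; Proposition \ref{Shen, Lemma 1.4} is needed to transfer from one value of $\rho$ to a nearby one.

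\emph{Stage 2: smoothing.} The problem now reduces to approximating each piece $g=f\psi_{n,k}$ by a $C^\infty_c$ function in the $BMO_L$-norm. Since $g$ is supported in $B(x_{n,k},2^{1-n/2})$ with $\rho(x_{n,k})\sim 2^{-n/2}$, I would show that $g$ belongs to $vmo_n(\mathbb R^d)$: its small-scale mean oscillation vanishes by $\gamma_1(f)=0$ (using (2.1) to identify $e^{-tL}f$ with $f_{B(\cdot,\sqrt t)}$ up to harmless error plus a term handled by the smoothness of $\psi_{n,k}$ from Lemma \ref{DZ, Lemma 2.5}), and the far-field condition in the definition of $vmo_n$ is automatic since $g$ has compact support. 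Theorem B then furnishes $g_j\in C^\infty_c(\mathbb R^d)$ with $\|g-g_j\|_{bmo_n}\to 0$; on a fixed compact set this dominates the $BMO_L$-norm via Corollary \ref{a consequence of DZ, Lemma 2.3} and a direct comparison of the two definitions. A diagonal argument over $N$ and $j$ then completes the proof.

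\emph{Main obstacle.} The real technical hurdle lies in Stage 1. Unlike classical $VMO$, where vanishing mean oscillation directly controls behavior at large and small scales, here the conditions $\gamma_i(f)=0$ are expressed via the non-translation-invariant semigroup $e^{-tL}$ rather than via averages $f_B$. Passing between the two viewpoints--so as to control the truncation error coming from the partition of unity, which itself introduces oscillations at scale $2^{-n/2}$--is the core computation, and rests on the Feynman--Kac estimate (2.1), the comparison inequality for $\rho$ in Proposition \ref{Shen, Lemma 1.4}, and the locally finite covering property in Corollary \ref{a consequence of DZ, Lemma 2.3}(ii)--(iii).
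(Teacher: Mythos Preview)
Your route is genuinely different from the paper's. The paper does \emph{not} approximate a given $f\in VMO_L$ directly. Instead it argues by duality: it introduces $CMO_L(\mathbb R^d)$, the closure of $C^\infty_c(\mathbb R^d)$ in $BMO_L(\mathbb R^d)$, and proves as a separate theorem that $(CMO_L)^*=H^1_L$. Since $(VMO_L)^*=H^1_L$ is already known from \cite{DDSTY}, Hahn--Banach reduces everything to the inclusion $C^\infty_c\subset VMO_L$, which is then checked by verifying $\gamma_1(f)=\gamma_2(f)=\gamma_3(f)=0$ for each fixed $f\in C^\infty_c$, using the Feynman--Kac bound and the kernel comparison of $T_t$ with the Gaussian. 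The partition $\{\psi_{n,k}\}$ does appear, but only inside the proof that $(CMO_L)^*=H^1_L$, where it is used to localize \emph{functionals}, not elements of $VMO_L$.

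Your direct-approximation plan has a gap at exactly the point you call the ``main obstacle'', and it is more serious than a computation. Both stages require you to pass from the semigroup conditions $\gamma_i(f)=0$---which control $\|f-e^{-tL}f\|_{L^2(B)}$---to classical vanishing-oscillation statements controlling $\|f-f_B\|$ or $\|f\|_{L^1(B)}$, uniformly over all balls. Nothing in the paper supplies this translation for a general $f\in VMO_L$: the Feynman--Kac bound \eqref{the Feynman-Kac formula} only gives one-sided domination, and the kernel comparison with the Gaussian is local (valid for $|x-y|\lesssim\rho(x)$), so it does not obviously yield uniform control as $\rho(x)\to 0$ or as $x\to\infty$. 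In Stage~1 there is the additional problem that $f-f_N=f\bigl(1-\sum_{\Gamma_N}\psi_{n,k}\bigr)$ carries oscillation from the cutoff itself on the transition region, and bounding that in $BMO_L$ again needs classical-type control of $f$ there, not semigroup control. The paper's duality argument sidesteps all of this: the only ``translation'' it ever performs is the easy direction, checking $\gamma_i(f)=0$ for $f\in C^\infty_c$. If you want to salvage the direct approach, you would first have to prove an independent characterization of $VMO_L$ by classical (average-based) vanishing conditions---essentially a theorem in its own right.
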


To prove Theorem \ref{the main theorem}, we need also the following two lemmas.

\begin{Lemma}[see \cite{Ky2}, Lemma 6.5]\label{Ky2, Lemma 6.5}
Let $1<q\leq \infty$,  $n\in\mathbb Z$ and $x\in \mathcal B_n$. Suppose that $f\in h^1_n(\mathbb R^d)$ with supp $f\subset B(x, 2^{1-n/2})$. Then, there are $(H^1_L,q)$-atoms $a_j$ related to the balls $B(x_j,r_j)$ such that $ B(x_j,r_j)\subset B(x, 2^{2-n/2})$ and
$$f= \sum_{j=1}^\infty \lambda_j a_j, \quad \sum_{j=1}^\infty |\lambda_j|\leq C \|f\|_{h^1_n}$$
with a positive constant $C$ independent of $n$ and $f$.
\end{Lemma}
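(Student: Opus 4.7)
The plan is to split $f$ into a normalized ``mean'' piece and a compactly supported mean-zero piece, then decompose the latter via classical $H^1$-theory, and check that both qualify as $(H^1_L,q)$-atoms thanks to the careful choice $\mathcal C_L=8\cdot 9^{k_0}C_0$.

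\textbf{Preliminary $\rho$-estimate.} Since $x\in\mathcal B_n$, one has $\rho(x)\in(2^{-(n+1)/2},2^{-n/2}]$, so Proposition \ref{Shen, Lemma 1.4} gives $\rho(y)\asymp 2^{-n/2}$ uniformly for $y\in B(x,2^{2-n/2})$, with constants depending only on $C_0$ and $k_0$. The value of $\mathcal C_L$ is calibrated precisely so that every ball $B(y_0,r_0)$ with $y_0\in B(x,2^{2-n/2})$ and $r_0\leq 2^{2-n/2}$ satisfies $r_0\leq\mathcal C_L\rho(y_0)$; moreover, if $r_0\gtrsim 2^{-n/2}$, then $r_0>\mathcal C_L^{-1}\rho(y_0)$ and no cancellation is required for the atom.

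\textbf{Splitting and decomposition.} Let $I=\int f$ and $\varphi=|B(x,2^{2-n/2})|^{-1}\chi_{B(x,2^{2-n/2})}$, and write $f=I\varphi+g$ with $g=f-I\varphi$. Then $I\varphi=|I|\cdot(\operatorname{sgn}(I)\varphi)$, and $\operatorname{sgn}(I)\varphi$ is an $(H^1_L,q)$-atom by the preliminary estimate; its coefficient $|I|$ is controlled by $\int|f|\leq\int\mathcal M_n f=\|f\|_{h^1_n}$, since $|f|\leq\mathcal M_n f$ almost everywhere. The remainder $g$ is compactly supported in $B(x,2^{2-n/2})$, satisfies $\int g=0$, and a direct Gaussian computation yields $\|\varphi\|_{h^1_n}\leq C$, so $\|g\|_{h^1_n}\leq C\|f\|_{h^1_n}$. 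I would then show that $g$ in fact lies in the classical Hardy space $H^1(\mathbb R^d)$ with $\|g\|_{H^1}\leq C\|f\|_{h^1_n}$: the cancellation $\int g=0$ combined with the gradient bound $|\nabla P_t(w)|\lesssim t^{-(d+1)/2}e^{-c|w|^2/t}$ forces $\sup_{t\geq 2^{-n/2}}|P_t*g|$ to be integrable with norm $\lesssim\|g\|_{h^1_n}$. Applying the classical atomic decomposition to $g$, while using the same rapid-decay estimate on the grand maximal function $Mg$ outside $B(x,2^{2-n/2})$ to keep Whitney balls of its level sets inside a fixed multiple of $B(x,2^{2-n/2})$, produces $g=\sum_j\lambda_j a_j$ with $(H^1,q)$-atoms $a_j$ supported in balls $B(x_j,r_j)\subset B(x,2^{2-n/2})$ and $\sum_j|\lambda_j|\leq C\|f\|_{h^1_n}$. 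By the preliminary estimate, every such $a_j$ is automatically an $(H^1_L,q)$-atom, since its radius fits the constraint and its classical-atom status already supplies the $L^q$-size and (when needed) cancellation conditions.

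\textbf{Main obstacle.} The delicate step is the uniform-in-$n$ passage from $h^1_n$ to $H^1$ for $g$, together with the localization of the resulting atomic supports inside $B(x,2^{2-n/2})$; both rely on the interplay of the mean-zero cancellation of $g$ with quantitative Gaussian kernel estimates. Once these are in place, the identification of each piece as an $(H^1_L,q)$-atom is a routine check against the definition and the precise value of $\mathcal C_L$.
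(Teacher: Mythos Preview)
The paper does not give its own proof of this lemma; it is quoted verbatim from \cite{Ky2} and used as a black box. There is therefore nothing in the present paper to compare your argument against.

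That said, your outline is the standard route and is essentially correct. One small correction: you take $\varphi$ to be the normalized indicator of the \emph{larger} ball $B(x,2^{2-n/2})$, so $\mathrm{supp}\,g\subset B(x,2^{2-n/2})$; but any localized atomic decomposition of $g$ will only place the atoms inside a fixed dilate of $\mathrm{supp}\,g$, not inside $\mathrm{supp}\,g$ itself. To obtain the stated inclusion $B(x_j,r_j)\subset B(x,2^{2-n/2})$ you should instead choose $\varphi$ supported in the \emph{smaller} ball $B(x,2^{1-n/2})$ (the same ball as $f$), so that $g$ is supported there and a factor-two enlargement lands in $B(x,2^{2-n/2})$. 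With that adjustment, the $\rho$-arithmetic you sketched still goes through: for any $y_0\in B(x,2^{2-n/2})$ one has $|x-y_0|/\rho(x)<4\sqrt2<9$, hence $\rho(y_0)\geq C_0^{-1}9^{-k_0}\rho(x)>C_0^{-1}9^{-k_0}2^{-(n+1)/2}$, and so $r_0\leq 2^{2-n/2}=4\cdot2^{-n/2}<8\cdot9^{k_0}C_0\cdot\rho(y_0)=\mathcal C_L\rho(y_0)$; similarly the upper bound $\rho(y_0)\leq 9C_0\,2^{-n/2}$ shows that the single big atom $\varphi$ (of radius $2^{1-n/2}$) satisfies $r_0>\mathcal C_L^{-1}\rho(y_0)$ and needs no cancellation.

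The point you flag as the ``main obstacle'' --- upgrading $g\in h^1_n$ with $\int g=0$ to $g\in H^1$ with comparable norm, and localizing its atomic decomposition --- is genuine but routine: the mean-zero condition gives $\sup_{t>0}|P_t*g(y)|\lesssim R\|g\|_{L^1}|y-x|^{-(d+1)}$ outside $B(x,2R)$, which both makes the full maximal function integrable there and forces the Whitney cubes of $\{Mg>\alpha\}$ for $\alpha\gtrsim\|g\|_{L^1}/R^d$ to stay inside a fixed multiple of $B(x,R)$, while the residual low-level piece is itself a bounded multiple of a single atom on that ball.
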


\begin{Lemma}[see (4.7) in \cite{DZ}]\label{DZ}\label{DZ, 4.7}
For every $f\in H^1_L(\mathbb R^d)$, we have
$$\sum_{n,k} \|\psi_{n,k}f\|_{h^1_n}\leq C \|f\|_{H^1_L}.$$
\end{Lemma}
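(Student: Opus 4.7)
The plan is to use the atomic decomposition of Theorem A with $q = \infty$. Writing $f = \sum_j \lambda_j a_j$ with $\sum_j |\lambda_j| \leq C \|f\|_{H^1_L}$ and applying the triangle inequality reduces the claim to the uniform bound
$$\sum_{n,k} \|\psi_{n,k} a\|_{h^1_n} \leq C$$
over all $(H^1_L, \infty)$-atoms $a$.

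Fix such an atom $a$ supported in $B(x_0, r)$ with $r \leq \mathcal{C}_L \rho(x_0)$. First I would show the sum is effectively finite. The product $\psi_{n,k} a$ vanishes unless $B(x_{n,k}, 2^{1-n/2}) \cap B(x_0, r) \neq \emptyset$; combined with $\rho(x_{n,k}) \sim 2^{-n/2}$ (since $x_{n,k} \in \mathcal{B}_n$) and Proposition \ref{Shen, Lemma 1.4} (which gives $\rho(x_{n,k}) \sim \rho(x_0)$ on the intersection), this forces $2^{-n/2} \sim \rho(x_0)$ and $|x_{n,k} - x_0| \leq C \rho(x_0)$. Corollary \ref{a consequence of DZ, Lemma 2.3}(ii) then caps the number of contributing pairs by a geometric constant $N_0$.

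Next I would bound $\|\psi_{n,k} a\|_{h^1_n}$ uniformly on each contributing pair. In the case $r > \rho(x_0)/\mathcal{C}_L$ (no moment cancellation on $a$), we have $r \sim \rho(x_0) \sim 2^{-n/2}$, so $\psi_{n,k} a$ is supported in a ball of radius comparable to $2^{-n/2}$ with $L^\infty$-norm $\leq |B(x_0, r)|^{-1} \leq C 2^{nd/2}$; since $h^1_n$-atoms at scale $\geq 2^{-n/2}$ require no vanishing moment, $\psi_{n,k} a$ is a bounded multiple of such an atom and $\|\psi_{n,k} a\|_{h^1_n} \leq C$. In the case $r \leq \rho(x_0)/\mathcal{C}_L$, we have $\int a = 0$ and (by the choice $\mathcal{C}_L = 8 \cdot 9^{k_0} C_0$ relative to the constants in Proposition \ref{Shen, Lemma 1.4}) also $r \leq 2^{-n/2}$. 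I would split
$$\psi_{n,k} a = \psi_{n,k}(x_0)\, a + g, \qquad g := (\psi_{n,k} - \psi_{n,k}(x_0))\, a.$$
The first summand is a scalar of modulus $\leq 1$ times the mean-zero $L^\infty$-atom $a$ at scale $r \leq 2^{-n/2}$, hence is (up to constants) an $h^1_n$-atom. For $g$, the Lipschitz bound $|\psi_{n,k}(y) - \psi_{n,k}(x_0)| \leq C\, 2^{n/2} r$ on $B(x_0, r)$ from Lemma \ref{DZ, Lemma 2.5} yields $\|g\|_\infty \leq C\, 2^{n/2} r \cdot |B(x_0,r)|^{-1}$. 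Subtracting the mean, write $g = g_0 + \bar g\, \chi_{B(x_0, r)}$ with $\bar g := |B(x_0, r)|^{-1} \int g$ and $|\bar g| \cdot |B(x_0, r)| = |\int \psi_{n,k} a| \leq C\, 2^{n/2} r$ (the last inequality again by Lipschitz cancellation against $\int a = 0$). Then $g_0$ is at most $C\, 2^{n/2} r \leq C$ times an $h^1_n$-atom at scale $r$, while a direct estimate using the Gaussian decay of $P_s$ for $0 < s < 2^{-n/2}$ gives $\|\mathcal{M}_n(\chi_{B(x_0, r)}/|B(x_0, r)|)\|_{L^1} \leq C(1 + \log(2^{-n/2}/r))$, so
$$\|\bar g\, \chi_{B(x_0, r)}\|_{h^1_n} \leq C\, 2^{n/2} r \cdot (1 + \log(2^{-n/2}/r)) \leq C,$$
using that $u(1 + \log(1/u)) \leq 2$ for $u = 2^{n/2} r \in (0, 1]$.

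Summing $\|\psi_{n,k} a\|_{h^1_n} \leq C$ over the at most $N_0$ contributing pairs completes the uniform bound on atoms, and the assertion follows upon summing over the atomic decomposition of $f$. I expect the main obstacle to be the bump term $\bar g\, \chi_{B(x_0, r)}$ in the small-atom case: a naive atomic decomposition of this non-mean-zero indicator produces a logarithmic loss $\log(2^{-n/2}/r)$ that threatens divergence as $r \to 0$, but it is absorbed precisely by the Lipschitz prefactor $2^{n/2} r$, since $u \log(1/u) \to 0$ as $u \to 0^+$.
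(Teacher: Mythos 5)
Your argument is correct, but there is nothing in the paper to compare it against: the author does not prove this lemma, importing it verbatim as display (4.7) of \cite{DZ}. What you have written is a sound, self-contained reconstruction of the standard argument behind that citation: reduce via Theorem A to a uniform bound over $(H^1_L,\infty)$-atoms; observe that for a fixed atom $a$ supported in $B(x_0,r)$ only boundedly many $(n,k)$ contribute, since $B(x_{n,k},2^{1-n/2})\cap B(x_0,r)\neq\emptyset$ together with $x_{n,k}\in\mathcal B_n$ and Proposition \ref{Shen, Lemma 1.4} forces $2^{-n/2}\sim\rho(x_{n,k})\sim\rho(x_0)$ and $|x_{n,k}-x_0|\lesssim\rho(x_0)$, after which the finite-overlap bound of Lemma \ref{DZ, Lemma 2.3} caps the count; then bound each $\|\psi_{n,k}a\|_{h^1_n}$ by splitting off $\psi_{n,k}(x_0)a$ and treating the Lipschitz remainder, whose lost mean is of size $O(2^{n/2}r)$ and whose normalized indicator costs only $O(1+\log(2^{-n/2}/r))$ in $h^1_n$, the product being bounded because $u(1+\log(1/u))\leq 1$ on $(0,1]$. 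I checked the delicate points and they hold: in particular $r\leq\rho(x_0)/\mathcal C_L$ does force $r\leq 2^{-n/2}$ for contributing $(n,k)$ (a short bootstrap with Proposition \ref{Shen, Lemma 1.4} using $\mathcal C_L\geq 2C_0$), and the logarithmic loss from the non-mean-zero bump is genuinely absorbed by the factor $2^{n/2}r$. Two places deserve tightening, though neither is a gap in substance: (i) Corollary \ref{a consequence of DZ, Lemma 2.3}(ii) is stated only for the ball $B(x,2\rho(x))$, whereas you need the analogous count for $B(x_0,\mathcal C_L\rho(x_0))$ with the much larger constant $\mathcal C_L$; this follows by the same appeal to Lemma \ref{DZ, Lemma 2.3} but is not literally the statement you cite. (ii) You implicitly invoke the atomic characterization of $h^1_n$ (Goldberg's local Hardy space at scale $2^{-n/2}$) with constants uniform in $n$; this is true by dilation invariance but should be said explicitly, since the paper only quotes the duality $\left(h^1_n\right)^*=bmo_n$ and not the atomic decomposition of $h^1_n$ itself.
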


Now, we are ready to give the proof of the main theorem.

\begin{proof}[\bf  Proof of Theorem \ref{the main theorem}]
By assumption, there exists $\mathscr M>0$ such that 
$$\|f_j\|_{H^1_L}\leq \mathscr M\;,  \quad \mbox{for all}\; j\geq 1.$$

 Let $(n,k)\in\mathbb Z\times \mathbb Z^+$. Then,  for almost every $x\in\mathbb R^d$, $\psi_{n,k}(x)f_j(x) \to \psi_{n,k}(x)f(x)$ since  $f_j(x)\to f(x)$. By Theorem C, this yields that $\psi_{n,k}f$ belongs to $ h^1_n(\mathbb R^d)$ and $\{\psi_{n,k}f_j\}_{j\geq 1}$  weak$^*$-converges to $\psi_{n,k}f$ in $h^1_n(\mathbb R^d)$, that is,
\begin{equation}\label{weak convergence 1}
\lim\limits_{j\to\infty}\int_{\mathbb R^d}\psi_{n,k}(x)f_j(x)\phi(x)dx = \int_{\mathbb R^d} \psi_{n,k}(x)f(x)\phi(x)dx,
\end{equation}
for all $\phi\in C^\infty_c(\mathbb R^d)$. Furthermore, 
\begin{equation}\label{weak convergence 2}
\|\psi_{n,k}f\|_{h^1_n}\leq \varliminf\limits_{j\to\infty}\|\psi_{n,k}f_j\|_{h^1_n}.
\end{equation}

As $x_{n,k}\in\mathcal B_n$ and supp $\psi_{n,k}f\subset B(x_{n,k}, 2^{1-n/2})$, by Lemma \ref{Ky2, Lemma 6.5}, there are $(H^1_L,2)$-atoms $a_j^{n,k}$ related to the balls $B(x_j^{n,k}, r_j^{n,k})\subset B(x_{n,k}, 2^{2-n/2})$ such that 
$$\psi_{n,k}f =\sum_j \lambda_j^{n,k} a_j^{n,k}, \quad \sum_j |\lambda_j^{n,k}|\leq C \|\psi_{n,k}f\|_{h^1_n}.$$

Let $N,K\in\mathbb Z^+$ be arbitrary. Then, the above  together with (\ref{weak convergence 2}) and Lemma \ref{DZ, 4.7} imply that there exists $m_{N,K}\in\mathbb Z^+$ such that
\begin{eqnarray*}
\sum_{n=-N}^N \sum_{k=1}^K \sum_j |\lambda_j^{n,k}| &\leq& \sum_{n=-N}^N \sum_{k=1}^K C \Big(\frac{\mathscr M}{(1+n^2)(1+k^2)}+ \|\psi_{n,k}f_{m_{N,K}}\|_{h^1_n}\Big)\\
&\leq& C \sum_{n,k}\frac{\mathscr M}{(1+n^2)(1+k^2)}+ C \|f_{m_{N,K}}\|_{H^1_L}\\
&\leq& C \mathscr M,
\end{eqnarray*}
where the constants $C$ are independent of $N,K$. By Theorem A, this allows to conclude that $$f=\sum_{n,k}\psi_{n,k}f\in H^1_L(\mathbb R^d)\quad\mbox{and}\quad \|f\|_{H^1_L}\leq \sum_{n,k}\sum_j |\lambda_j^{n,k}| \leq C \mathscr M.$$

Finally, we need to show that for every $\phi\in VMO_L(\mathbb R^d)$, 
\begin{equation}\label{the last equation}
\lim_{j\to\infty} \int_{\mathbb R^d} f_j(x)\phi(x) dx= \int_{\mathbb R^d} f(x) \phi(x) dx.
\end{equation}

By Theorem \ref{density of C^infty in VMO_L}, we only need to prove (\ref{the last equation}) for $\phi\in C^\infty_c(\mathbb R^d)$. In fact, by $(i)$ of Corollary \ref{a consequence of DZ, Lemma 2.3}, there exists a finite set $\Gamma_\phi\subset \mathbb Z\times \mathbb Z^+$ such that 
$$f\phi = \sum_{(n,k)\in\Gamma_\phi} \psi_{n,k}f\phi\quad\mbox{and}\quad f_j\phi = \sum_{(n,k)\in\Gamma_\phi} \psi_{n,k}f_j\phi$$
 since supp $\psi_{n,k}\subset B(x_{n,k},2^{1-n/2})$. This together with (\ref{weak convergence 1}) give
\begin{eqnarray*}
\lim_{j\to\infty} \int_{\mathbb R^d} f_j(x)\phi(x) dx &=& \lim_{j\to\infty}\int_{\mathbb R^d}\sum_{(n,k)\in\Gamma_\phi} \psi_{n,k}(x)f_j(x) \phi(x) dx\\
&=&\sum_{(n,k)\in\Gamma_\phi} \lim_{j\to\infty}\int_{\mathbb R^d}\psi_{n,k}(x)f_j(x) \phi(x) dx\\
&=& \sum_{(n,k)\in\Gamma_\phi} \int_{\mathbb R^d}\psi_{n,k}(x)f(x) \phi(x) dx\\
&=& \int_{\mathbb R^d} f(x) \phi(x) dx,
\end{eqnarray*}
which ends the proof of Theorem \ref{the main theorem}.

\end{proof}

\section{Proof of Theorem \ref{density of C^infty in VMO_L}}\label{the last section}

The main point in the proof of Theorem \ref{density of C^infty in VMO_L} is the theorem.

\begin{Theorem}\label{the duality between CMO_L and H^1_L}
Let $CMO_L(\mathbb R^d)$ be the closure of $C^\infty_c(\mathbb R^d)$ in $BMO_L(\mathbb R^d)$. Then, $H^1_L(\mathbb R^d)$ is the dual space of $CMO_L(\mathbb R^d)$.
\end{Theorem}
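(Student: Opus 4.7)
My plan is to establish the two inclusions $H^1_L(\mathbb R^d)\subseteq (CMO_L(\mathbb R^d))^*$ and $(CMO_L(\mathbb R^d))^*\subseteq H^1_L(\mathbb R^d)$ with equivalent norms. The first is a direct consequence of the duality $(H^1_L)^*=BMO_L$ of \cite{DGMTZ}: each $f\in H^1_L(\mathbb R^d)$ defines a bounded linear functional on $BMO_L(\mathbb R^d)$ of norm at most $C\|f\|_{H^1_L}$, which by restriction is bounded on the closed subspace $CMO_L(\mathbb R^d)$.

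For the reverse inclusion, given $\ell\in (CMO_L(\mathbb R^d))^*$, my approach is to localize $\ell$ using the partition of unity $\{\psi_{n,k}\}$ from Lemma \ref{DZ, Lemma 2.5} and then invoke Dafni's local duality (Theorem B). The key first step is the uniform comparison
$$\|\psi_{n,k}\phi\|_{BMO_L}\le C\|\phi\|_{bmo_n}\qquad\text{for all }\phi\in C_c^\infty(\mathbb R^d),$$
with $C$ independent of $(n,k)$. This uses (i) $x_{n,k}\in\mathcal B_n$ combined with Proposition \ref{Shen, Lemma 1.4}, so that $\rho(y)\sim 2^{-n/2}$ throughout $\mathrm{supp}\,\psi_{n,k}\subset B(x_{n,k},2^{1-n/2})$; (ii) the gradient bound $\|\nabla\psi_{n,k}\|_\infty\le C\,2^{n/2}$ from Lemma \ref{DZ, Lemma 2.5}; and (iii) the observation that any ball $B(x,r)$ with $\rho(x)\le r$ meeting $\mathrm{supp}\,\psi_{n,k}$ must satisfy $r\gtrsim 2^{-n/2}$, so its contribution to the supplementary $BMO_L$-term is controlled by the $bmo_n$-norm. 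Setting $T_{n,k}(\phi):=\ell(\psi_{n,k}\phi)$ for $\phi\in C_c^\infty(\mathbb R^d)$, the comparison yields $|T_{n,k}(\phi)|\le C\|\ell\|\,\|\phi\|_{bmo_n}$. By Theorem B, $T_{n,k}$ extends to a bounded functional on $vmo_n(\mathbb R^d)$, represented by some $f_{n,k}\in h^1_n(\mathbb R^d)$ with $\|f_{n,k}\|_{h^1_n}\le C\|\ell\|$ and $\mathrm{supp}\,f_{n,k}\subset\overline{B(x_{n,k},2^{1-n/2})}$. Putting $f:=\sum_{n,k}f_{n,k}$ (a locally finite sum by Corollary \ref{a consequence of DZ, Lemma 2.3}(ii)) and using $\sum_{n,k}\psi_{n,k}=1$ together with Corollary \ref{a consequence of DZ, Lemma 2.3}(i), one checks directly that $\int_{\mathbb R^d} f\phi=\ell(\phi)$ for every $\phi\in C_c^\infty(\mathbb R^d)$.

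The step I expect to be the main technical obstacle is the Hardy-norm bound $\|f\|_{H^1_L}\le C\|\ell\|$, since the per-component estimate $\|f_{n,k}\|_{h^1_n}\le C\|\ell\|$ is too coarse to sum. My plan is to work with finite truncations $f_N:=\sum_{|n|+k\le N}f_{n,k}$. Each $f_N$ lies in $H^1_L(\mathbb R^d)$ as a finite sum of $h^1_n$-localized pieces (decomposable into $(H^1_L,2)$-atoms via Lemma \ref{Ky2, Lemma 6.5}). To bound $\|f_N\|_{H^1_L}$ \emph{uniformly} in $N$, I would appeal to the duality of \cite{DGMTZ}: it suffices to estimate
$$\left|\int_{\mathbb R^d} f_N\varphi\right|=|\ell(\chi_N\varphi)|\le \|\ell\|\,\|\chi_N\varphi\|_{BMO_L}$$
for $\varphi\in BMO_L(\mathbb R^d)$ with $\|\varphi\|_{BMO_L}\le 1$, where $\chi_N:=\sum_{|n|+k\le N}\psi_{n,k}$. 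Since $\chi_N$ has compact support, $\chi_N\varphi\in CMO_L(\mathbb R^d)$ (via mollification), so the pairing with $\ell$ is legitimate; the crux is the uniform multiplier estimate $\|\chi_N\varphi\|_{BMO_L}\le C\|\varphi\|_{BMO_L}$ with $C$ independent of $N$, for which the gradient decay inherited from Lemma \ref{DZ, Lemma 2.5} together with the finite-overlap property of Lemma \ref{DZ, Lemma 2.3} is the key tool. Once $\sup_N\|f_N\|_{H^1_L}\le C\|\ell\|$ is established, the local finiteness of the sum yields $f_N\to f$ pointwise, and the Feynman--Kac majorization (\ref{the Feynman-Kac formula}) combined with Fatou's lemma gives $\|\mathcal M_L f\|_{L^1}\le\liminf_N\|\mathcal M_L f_N\|_{L^1}\le C\|\ell\|$, completing the argument.
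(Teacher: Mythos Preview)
Your overall architecture matches the paper's: localize $\ell$ by the partition $\{\psi_{n,k}\}$, invoke Theorem~B to produce $f_{n,k}\in h^1_n$ supported in $B(x_{n,k},2^{1-n/2})$, set $f=\sum f_{n,k}$, and verify $\int f\phi=\ell(\phi)$ for $\phi\in C_c^\infty$. The multiplier bound $\|\psi_{n,k}\phi\|_{BMO_L}\le C\|\phi\|_{bmo_n}$ is exactly Lemma~\ref{estimates for multipliers}, and your sketch of it is fine.

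The gap is in your summability step. You propose to bound $\|f_N\|_{H^1_L}$ by duality:
\[
\Big|\int f_N\varphi\Big|=|\ell(\chi_N\varphi)|\le \|\ell\|\,\|\chi_N\varphi\|_{BMO_L},\qquad \varphi\in BMO_L,\ \|\varphi\|_{BMO_L}\le 1.
\]
Both the equality and the evaluation of $\ell$ on $\chi_N\varphi$ fail here. The functional $\ell$ is only defined on $CMO_L$, and for a generic $\varphi\in BMO_L$ the product $\chi_N\varphi$ is a compactly supported $BMO_L$ function but \emph{not} an element of $CMO_L$: compact support does not force the small-scale oscillation condition $\gamma_1=0$ (think of a $\log|x-x_0|$--type singularity inside $\mathrm{supp}\,\chi_N$), so mollifications do not converge in $BMO_L$. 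Consequently $\ell(\chi_N\varphi)$ is undefined, and the displayed identity---which you only know for $\varphi\in C_c^\infty$---cannot be extended. Testing $f_N$ only against $\varphi\in CMO_L$ would be circular, since that recovers merely the $(CMO_L)^*$-norm of $f_N$, not its $H^1_L$-norm.

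The paper circumvents this by working on the predual side and never leaving $C_c^\infty$: for each $(n,k)$ it picks $\phi_{n,k}\in C_c^\infty$ with $\|\phi_{n,k}\|_{vmo_n}\le 1$ and $\|f_{n,k}\|_{h^1_n}\le C\,\mathcal T(\psi_{n,k}\phi_{n,k})$, then shows (using the bounded-overlap Corollary~\ref{a consequence of DZ, Lemma 2.3}(ii) and Lemma~\ref{equivalent norm for BMO_L}) that the finite sum $\phi=\sum_{(n,k)\in\Gamma}\psi_{n,k}\phi_{n,k}\in C_c^\infty$ satisfies $\|\phi\|_{BMO_L}\le C$ uniformly in $\Gamma$. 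This gives $\sum_{n,k}\|f_{n,k}\|_{h^1_n}\le C\|\mathcal T\|$ directly, after which the atomic decomposition (Lemma~\ref{Ky2, Lemma 6.5} and Theorem~A) yields $f\in H^1_L$ with the right bound. Replacing your truncation/Fatou argument with this test-function construction closes the gap.
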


To prove Theorem \ref{the duality between CMO_L and H^1_L}, we need the following three lemmas.

\begin{Lemma}\label{a consequence of Shen, Lemma 1.4}
There exists a constant $C>0$ such that 
$$2^{-n/2}\leq C r$$
whenever $B(x_{n,k},2^{1-n/2})\cap B(x,r)\ne \emptyset$ and $\rho(x)\leq r$.
\end{Lemma}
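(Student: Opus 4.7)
\medskip

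\textbf{Proof plan.} The conclusion is equivalent to saying that the ratio $2^{-n/2}/r$ is controlled independent of $x,r,n,k$. I will split into the trivial case $2^{-n/2}\le r$, which gives $C\ge 1$ for free, and the non-trivial case $r<2^{-n/2}$, where I have to exploit both the membership $x_{n,k}\in\mathcal B_n$ and the slow-variation estimate of Proposition \ref{Shen, Lemma 1.4}.

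First I would use the geometric hypothesis $B(x_{n,k},2^{1-n/2})\cap B(x,r)\ne\emptyset$ to write $|x-x_{n,k}|<2^{1-n/2}+r$. In the non-trivial case $r<2^{-n/2}$ this gives the crude but clean bound $|x-x_{n,k}|\le 3\cdot 2^{-n/2}$. Next, since $x_{n,k}\in\mathcal B_n$ we have $\rho(x_{n,k})>2^{-(n+1)/2}$, so combining with Proposition \ref{Shen, Lemma 1.4} applied at the pair $(x,x_{n,k})$,
\[
2^{-(n+1)/2} < \rho(x_{n,k}) \le C_0\,\rho(x)\Bigl(1+\frac{|x-x_{n,k}|}{\rho(x)}\Bigr)^{k_0/(k_0+1)}.
\]
Using the standing hypothesis $\rho(x)\le r$ together with the estimate on $|x-x_{n,k}|$, and the fact that $r<2^{-n/2}$ forces $1\le 2^{-n/2}/r$, I can absorb the $1$ into the $|x-x_{n,k}|/\rho(x)$ term and reduce the parenthesis to a constant times $2^{-n/2}/r$. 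This yields
\[
2^{-(n+1)/2} \le C_0\, r\cdot \Bigl(\tfrac{4\cdot 2^{-n/2}}{r}\Bigr)^{k_0/(k_0+1)}.
\]

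Finally, setting $\alpha=1/(k_0+1)$, I rearrange to isolate $(2^{-n/2}/r)^{\alpha}$ on one side; this gives a bound of the form $(2^{-n/2}/r)^{\alpha}\le C'$, hence $2^{-n/2}\le C r$ with $C:=(C')^{1/\alpha}$, a constant depending only on $C_0$ and $k_0$. Combining with the trivial case completes the argument.

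The only step that needs mild care is the absorption in the parenthesis (making sure one does not accidentally use $\rho(x)\le r$ in the wrong direction), but this is bookkeeping; the genuine content is entirely contained in Shen's slow-variation inequality of Proposition \ref{Shen, Lemma 1.4}.
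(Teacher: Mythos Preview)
Your argument is correct and is precisely the route the paper has in mind: the paper merely states that the lemma ``follows directly from Proposition \ref{Shen, Lemma 1.4}'' and leaves the details to the reader. The one step you flag as needing care---passing from $\rho(x)$ to $r$ in the Shen estimate---is indeed legitimate, because $\rho(x)\bigl(1+|x-x_{n,k}|/\rho(x)\bigr)^{k_0/(k_0+1)}=\rho(x)^{1/(k_0+1)}\bigl(\rho(x)+|x-x_{n,k}|\bigr)^{k_0/(k_0+1)}$ is monotone increasing in $\rho(x)$, so replacing $\rho(x)$ by the larger quantity $r$ in both occurrences simultaneously only increases the right-hand side; after that your displayed inequality and the rearrangement with $\alpha=1/(k_0+1)$ go through exactly as written.
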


The proof of Lemma \ref{a consequence of Shen, Lemma 1.4} follows directly from Proposition \ref{Shen, Lemma 1.4}. We leave the details to the reader.

\begin{Lemma}\label{estimates for multipliers}
Let $\psi_{n,k}$, $(n,k)\in\mathbb Z\times \mathbb Z^+$, be as in Lemma \ref{DZ, Lemma 2.5}. Then, there exists a constant $C$ independent of $n,k,\psi_{n,k}$, such that 
\begin{equation}\label{estimates for multipliers 1}
\|\psi_{n,k}f\|_{bmo_n}\leq C \|f\|_{bmo_n}
\end{equation}
for all $f\in bmo_n(\mathbb R^d)$, and
\begin{equation}\label{estimates for multipliers 2}
\|\psi_{n,k}\phi\|_{BMO_L}\leq C \|\phi\|_{bmo_n}
\end{equation}
for all $\phi\in C^\infty_c(\mathbb R^d)$.
\end{Lemma}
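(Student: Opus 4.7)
I will prove the two inequalities separately, exploiting the fact that the scale of $\psi_{n,k}$, namely $2^{-n/2}$, coincides with the critical scale in the definition of $bmo_n$. Recall that
$$\|g\|_{bmo_n}=\|g\|_{BMO}+\sup_{x,\,r\geq 2^{-n/2}}\frac{1}{|B(x,r)|}\int_{B(x,r)}|g(y)|\,dy,$$
and $\|\psi_{n,k}\|_{L^\infty}\leq 1$ (since the $\psi_{n',k'}$ are nonnegative and sum to $1$) while $\|\nabla\psi_{n,k}\|_{L^\infty}\leq C 2^{n/2}$ on a support of radius $2^{1-n/2}$.

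\textbf{Proof of \eqref{estimates for multipliers 1}.} The $L^1$-average part is immediate: for $r\geq 2^{-n/2}$,
$\frac{1}{|B(x,r)|}\int_{B(x,r)}|\psi_{n,k}f|\leq \|\psi_{n,k}\|_{L^\infty}\cdot\|f\|_{bmo_n}\leq \|f\|_{bmo_n}$, which also handles the BMO oscillation on these large balls by a factor of $2$. On balls $B=B(x,r)$ with $r<2^{-n/2}$, I decompose
$$\psi_{n,k}(y)f(y)-\bigl(\psi_{n,k}\bigr)_B f_B = \psi_{n,k}(y)\bigl(f(y)-f_B\bigr)+f_B\bigl(\psi_{n,k}(y)-(\psi_{n,k})_B\bigr),$$
use $\|\psi_{n,k}\|_{L^\infty}\leq 1$ on the first term to get $\|f\|_{BMO}$, and use the Lipschitz bound $|\psi_{n,k}(y)-(\psi_{n,k})_B|\leq C\,2^{n/2}r$ on the second. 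To control $|f_B|$, I pass to the ball $\widetilde B=B(x,2^{-n/2})$ and use the standard chain estimate $|f_B-f_{\widetilde B}|\leq C\log(2^{-n/2}/r)\,\|f\|_{BMO}$ together with $|f_{\widetilde B}|\leq \|f\|_{bmo_n}$. The offending factor then becomes
$2^{n/2}r\cdot C\log(2^{-n/2}/r)\,\|f\|_{BMO}+2^{n/2}r\,\|f\|_{bmo_n}$, and the function $t\log(1/t)$ is bounded on $(0,1]$, so everything is absorbed into $C\|f\|_{bmo_n}$.

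\textbf{Proof of \eqref{estimates for multipliers 2}.} The BMO seminorm is bounded by \eqref{estimates for multipliers 1} since $\|\cdot\|_{BMO}\leq \|\cdot\|_{bmo_n}$, so it remains to estimate the $BMO_L$ local part: for every ball $B(x,r)$ with $\rho(x)\leq r$, bound $\frac{1}{|B(x,r)|}\int_{B(x,r)}|\psi_{n,k}\phi|$. If $r\geq 2^{-n/2}$ this is $\leq \|\phi\|_{bmo_n}$ directly. If $r<2^{-n/2}$, the integral is nonzero only when $B(x,r)\cap B(x_{n,k},2^{1-n/2})\neq\emptyset$; then Lemma~\ref{a consequence of Shen, Lemma 1.4} (whose hypothesis $\rho(x)\leq r$ is in force) gives $2^{-n/2}\leq Cr$, so $r\sim 2^{-n/2}$. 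Enlarging $B(x,r)$ to $B(x,2^{-n/2})\subset B(x,C r)$ costs only a bounded volume factor, and the average of $|\phi|$ over $B(x,2^{-n/2})$ is $\leq \|\phi\|_{bmo_n}$ by definition. This yields \eqref{estimates for multipliers 2}.

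\textbf{Main obstacle.} The delicate point is the small-ball case $r<2^{-n/2}$ in \eqref{estimates for multipliers 1}: one must control $|f_B|$ by $\|f\|_{bmo_n}$ even though $bmo_n$ provides no direct $L^1$-control of averages at scales below $2^{-n/2}$. The workaround is the combination of the logarithmic chain estimate with the $2^{n/2}r$ factor coming from the Lipschitz bound on $\psi_{n,k}$, which together produce the bounded quantity $t\log(1/t)$ with $t=2^{n/2}r$. In \eqref{estimates for multipliers 2}, the corresponding delicate point, namely passing from small $r$ to scale $2^{-n/2}$, is resolved by Lemma~\ref{a consequence of Shen, Lemma 1.4}, which forces $r\sim 2^{-n/2}$ in the only geometric configuration that matters.
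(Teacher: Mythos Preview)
Your argument is correct. The proof of \eqref{estimates for multipliers 2} is essentially identical to the paper's: both reduce the $BMO$ part to \eqref{estimates for multipliers 1} and dispose of the averages over balls with $\rho(x)\le r$ by invoking Lemma~\ref{a consequence of Shen, Lemma 1.4} to force $r\gtrsim 2^{-n/2}$ whenever the ball meets the support of $\psi_{n,k}$.

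For \eqref{estimates for multipliers 1} the two proofs take different routes. The paper does not work directly with $\psi_{n,k}f$; instead it appeals to the pointwise multiplier criterion of Nakai--Yabuta (Theorem~2 of \cite{NY}), which reduces the question to verifying that
\[
\sup_{r\le 2^{-n/2}}\ \log\!\Big(e+\frac{2^{-n/2}}{r}\Big)\,\frac{1}{|B(x,r)|}\int_{B(x,r)}\bigl|\psi_{n,k}(y)-(\psi_{n,k})_{B(x,r)}\bigr|\,dy\ \le\ C,
\]
and this is immediate from $\|\nabla\psi_{n,k}\|_{L^\infty}\le C2^{n/2}$ together with the boundedness of $t\log(e+1/t)$ on $(0,1]$. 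Your approach instead carries out the product decomposition $\psi(f-f_B)+f_B(\psi-\psi_B)$ by hand, controlling $|f_B|$ through the telescoping estimate $|f_B-f_{\widetilde B}|\le C\log(2^{-n/2}/r)\|f\|_{BMO}$ and $|f_{\widetilde B}|\le\|f\|_{bmo_n}$. The decisive ingredient is the same in both arguments --- the bound on $t\log(1/t)$ with $t=2^{n/2}r$ --- but your treatment is self-contained and makes the role of the chain inequality explicit, while the paper's version is shorter at the cost of importing the Nakai--Yabuta theorem as a black box.
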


\begin{Lemma}\label{equivalent norm for BMO_L}
For every $f\in BMO_L(\mathbb R^d)$, we have
$$\|f\|_{BMO_L}\approx \sup_{r\leq \rho(x)}\frac{1}{|B(x,r)|}\int_{B(x,r)} |f(y)- f_{B(x,r)}|dy+ \sup_{\rho(x)\leq r\leq 2\rho(x)} \frac{1}{|B(x,r)|}\int_{B(x,r)}|f(y)|dy.$$
\end{Lemma}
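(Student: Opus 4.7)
The direction ``$\gtrsim$'' is immediate: the first term on the right is dominated by $\|f\|_{BMO}$ and the second term is a restriction of the second supremum in the definition of $\|f\|_{BMO_L}$, so both are bounded by $\|f\|_{BMO_L}$. The work is in proving the reverse inequality, which I denote $\|f\|_{BMO_L} \lesssim N_1(f) + N_2(f)$, where $N_1(f)$ and $N_2(f)$ are the two terms on the right-hand side.

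The plan is to split the two pieces of $\|f\|_{BMO_L}$ according to the size of the radius relative to the auxiliary function $\rho$. For a ball $B = B(x,r)$ with $r\leq \rho(x)$, the oscillation $\frac{1}{|B|}\int_B|f-f_B|\,dy$ is controlled directly by $N_1(f)$. For $r>\rho(x)$, use the trivial bound $|f-f_B|\leq |f|+|f_B|$, which gives $\frac{1}{|B|}\int_B|f-f_B|\,dy\leq \frac{2}{|B|}\int_B|f|\,dy$; so controlling this by $N_2(f)$ also takes care of the BMO oscillation on large balls. Hence it suffices to establish the key estimate
\begin{equation}\label{key-est-plan}
\sup_{\rho(x)\leq r}\frac{1}{|B(x,r)|}\int_{B(x,r)}|f(y)|\,dy \lesssim N_2(f).
\end{equation}

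To prove \eqref{key-est-plan}, I will use the partition-of-unity balls $B_{n,k}=B(x_{n,k},2^{-n/2})$ from Lemma \ref{DZ, Lemma 2.3}. Since $x_{n,k}\in \mathcal B_n$, the radius $2^{-n/2}$ is comparable to $\rho(x_{n,k})$; more precisely $\rho(x_{n,k}) \in (2^{-(n+1)/2}, 2^{-n/2}]$, so $B_{n,k}$ is a ball of radius in $[\rho(x_{n,k}), 2\rho(x_{n,k})]$ up to the factor $\sqrt 2$, which can be absorbed by a standard doubling argument. Consequently $\frac{1}{|B_{n,k}|}\int_{B_{n,k}}|f|\,dy \lesssim N_2(f)$ uniformly in $(n,k)$. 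Now for any $B(x,r)$ with $\rho(x)\leq r$, since the $B_{n,k}$ cover $\mathbb R^d$,
$$\int_{B(x,r)}|f(y)|\,dy \;\leq\; \sum_{B_{n,k}\cap B(x,r)\neq \emptyset}\int_{B_{n,k}}|f(y)|\,dy \;\lesssim\; N_2(f)\!\!\!\sum_{B_{n,k}\cap B(x,r)\neq \emptyset}\!\!\! |B_{n,k}|,$$
and the last sum is $\leq C|B(x,r)|$ by part (iii) of Corollary \ref{a consequence of DZ, Lemma 2.3}. Dividing by $|B(x,r)|$ yields \eqref{key-est-plan}.

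The main technical obstacle is the passage from the restricted supremum over radii $r\in[\rho(x),2\rho(x)]$ in $N_2(f)$ to the full supremum over $r\geq \rho(x)$; this is precisely what the covering argument via Lemma \ref{DZ, Lemma 2.3} together with the bounded-overlap estimate in Corollary \ref{a consequence of DZ, Lemma 2.3}(iii) accomplishes. A minor point to handle carefully is the mismatch between the radius $2^{-n/2}$ of $B_{n,k}$ and the window $[\rho(x_{n,k}),2\rho(x_{n,k})]$ appearing in $N_2$; this is settled by a one-step doubling, noting that the average of $|f|$ over a ball of radius $r$ is bounded, up to a constant, by the average over the ball of twice the radius with the same center, so uniform control of averages on balls with radius comparable to $\rho$ follows.
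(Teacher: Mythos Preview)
Your proposal is correct and follows essentially the same route as the paper: reduce to the key estimate \eqref{key-est-plan}, cover $B(x,r)$ by the balls $B_{n,k}=B(x_{n,k},2^{-n/2})$ that meet it, and use Corollary~\ref{a consequence of DZ, Lemma 2.3}(iii) to sum. One small simplification: your ``minor point'' about the mismatch is not actually needed. Since $x_{n,k}\in\mathcal B_n$ means $2^{-(n+1)/2}<\rho(x_{n,k})\leq 2^{-n/2}$, the radius $2^{-n/2}$ already satisfies $\rho(x_{n,k})\leq 2^{-n/2}<\sqrt{2}\,\rho(x_{n,k})<2\rho(x_{n,k})$, so $B_{n,k}$ is literally a ball with $\rho(z)\leq s\leq 2\rho(z)$ and no doubling step is required.
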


\begin{proof}[Proof of Lemma \ref{estimates for multipliers}]
Noting that $\psi_{n,k}$ is a multiplier of $bmo_n(\mathbb R^d)$ and $\|\psi_{n,k}\|_{L^\infty}\leq 1$, Theorem 2 of \cite{NY} allows us to reduce (\ref{estimates for multipliers 1}) to showing that
\begin{equation}\label{estimates for multipliers 3}
\frac{\log\Big(e+ \frac{2^{-n/2}}{r}\Big)}{|B(x,r)|}\int_{B(x,r)}\Big|\psi_{n,k}(y) -\frac{1}{|B(x,r)|}\int_{B(x,r)}\psi_{n,k}(z)dz\Big|dy\leq C
\end{equation}
holds for every ball $B(x,r)$ which satisfies $r\leq 2^{-n/2}$. In fact, from $\|\nabla \psi_{n,k}\|_{L^\infty}\leq C 2^{n/2}$ and the estimate  $\frac{r}{2^{-n/2}}\log\Big(e+ \frac{2^{-n/2}}{r}\Big)\leq \sup_{0<t\leq 1}t\log(e+1/t)<\infty$, 
\begin{eqnarray*}
&&\frac{\log\Big(e+ \frac{2^{-n/2}}{r}\Big)}{|B(x,r)|}\int_{B(x,r)}\Big|\psi_{n,k}(y) -\frac{1}{|B(x,r)|}\int_{B(x,r)}\psi_{n,k}(z)dz\Big|dy\\
&\leq& \frac{\log\Big(e+ \frac{2^{-n/2}}{r}\Big)}{|B(x,r)|^2}\int_{B(x,r)}\int_{B(x,r)}|\psi_{n,k}(y)-\psi_{n,k}(z)|dzdy\\
&\leq& \log\Big(e+ \frac{2^{-n/2}}{r}\Big) \|\nabla \psi_{n,k}\|_{L^\infty} 2r\\
&\leq& C  \frac{r}{2^{-n/2}}\log\Big(e+ \frac{2^{-n/2}}{r}\Big) \leq C,
\end{eqnarray*}
which proves (\ref{estimates for multipliers 3}), and thus (\ref{estimates for multipliers 1}) holds.

As (\ref{estimates for multipliers 1}) holds, we get
$$\|\psi_{n,k}\phi\|_{BMO}\leq \|\psi_{n,k}\phi\|_{bmo_n}\leq C \|\phi\|_{bmo_n}.$$
Therefore, to prove (\ref{estimates for multipliers 2}), we only need to show that 
\begin{equation}\label{estimates for multipliers 4}
\frac{1}{|B(x,r)|}\int_{B(x,r)}|\psi_{n,k}(y)\phi(y)|dy\leq C \|\phi\|_{bmo_n}
\end{equation}
holds for every $x\in \mathbb R^d$ and $r\geq \rho(x)$. Since supp $\psi_{n,k}\subset B(x_{n,k}, 2^{1-n/2})$,  (\ref{estimates for multipliers 4}) is obvious if $B(x,r)\cap B(x_{n,k}, 2^{1-n/2})=\emptyset$. Otherwise, as $\rho(x)\leq r$, Lemma \ref{a consequence of Shen, Lemma 1.4} gives $2^{-n/2}\leq C r$. As a consequence, we get
\begin{eqnarray*}
\frac{1}{|B(x,r)|}\int_{B(x,r)}|\psi_{n,k}(y) \phi(y)|dy &\leq& C \sup_{2^{-n/2}\leq r} \frac{1}{|B(x,r)|}\int_{B(x,r)}|\phi(y)|dy\\
&\leq& C \|\phi\|_{bmo_n},
\end{eqnarray*}
which proves (\ref{estimates for multipliers 4}), and hence (\ref{estimates for multipliers 2}) holds.

\end{proof}

\begin{proof}[Proof of Lemma \ref{equivalent norm for BMO_L}]
Clearly, it is sufficient to prove that
\begin{equation}\label{equivalent norm for BMO_L 0}
\sup_{\rho(x)\leq r} \frac{1}{|B(x,r)|}\int_{B(x,r)}|f(y)|dy\leq C \sup_{\rho(x)\leq r\leq 2\rho(x)} \frac{1}{|B(x,r)|}\int_{B(x,r)}|f(y)|dy.
\end{equation}

In fact, for every ball $B(x,r)$ which satisfies $\rho(x)\leq r$, setting
$$G=\{(n,k)\in \mathbb Z\times \mathbb Z^+: B(x_{n,k},2^{-n/2})\cap B(x,r)\ne \emptyset\},$$
one has 
$$B(x,r)\subset \cup_{(n,k)\in G}B(x_{n,k},2^{-n/2})\quad\mbox{and}\quad \sum_{(n,k)\in G}|B(x_{n,k}, 2^{-n/2})|\leq C |B(x,r)|$$
 since $\mathbb R^d=\cup_{n\in\mathbb Z}\mathcal B_n\subset \cup_{n,k}B(x_{n,k}, 2^{-n/2})$ and  $(iii)$ of Corollary \ref{a consequence of DZ, Lemma 2.3}. Therefore,
\begin{eqnarray*}
&&\frac{1}{|B(x,r)|}\int_{B(x,r)}|f(y)|dy \leq \frac{1}{|B(x,r)|} \int_{\cup_{(n,k)\in G}B(x_{n,k},2^{-n/2})} |f(y)|dy\\
&\leq&  \frac{1}{|B(x,r)|} \sum_{(n,k)\in G}|B(x_{n,k}, 2^{-n/2})| \sup_{\rho(z)\leq s\leq 2\rho(z)} \frac{1}{|B(z,s)|}\int_{B(z,s)}|f(y)|dy\\
&\leq& C \sup_{\rho(z)\leq s\leq 2\rho(z)} \frac{1}{|B(z,s)|}\int_{B(z,s)}|f(y)|dy,
\end{eqnarray*}
which implies that (\ref{equivalent norm for BMO_L 0}) holds.

\end{proof}

\begin{proof}[Proof of Theorem \ref{the duality between CMO_L and H^1_L}]
Since $CMO_L(\mathbb R^d)$ is a subspace of  $BMO_L(\mathbb R^d)$, which is the dual of $H^1_L(\mathbb R^d)$, every function $f$ in $H^1_L(\mathbb R^d)$ determines a bounded linear functional on $CMO_L(\mathbb R^d)$ of norm bounded by $\|f\|_{H^1_L}$.

Conversely, given a bounded linear functional $\mathcal T$ on $CMO_L(\mathbb R^d)$.  Then, for every $(n,k)\in\mathbb Z\times\mathbb Z^+$, from  (\ref{estimates for multipliers 2}) and density of $C^\infty_c(\mathbb R^d)$ in $vmo_n(\mathbb R^d)$,  the linear functional $\mathcal T_{n,k}(g)\mapsto \mathcal T(\psi_{n,k}g)$ is continuous on $vmo_n(\mathbb R^d)$. Consequently, by Theorem B, there exists $f_{n,k}\in h^1_n(\mathbb R^d)$ such that for every $\phi\in C^\infty_c(\mathbb R^d)$,
\begin{equation}\label{the duality between CMO_L and H^1_L 1}
\mathcal T(\psi_{n,k}\phi)= \mathcal T_{n,k}(\phi)=\int_{\mathbb R^d} f_{n,k}(y)\phi(y) dy,
\end{equation}
moreover, 
\begin{equation}\label{the duality between CMO_L and H^1_L 2}
\|f_{n,k}\|_{h^1_n}\leq C \|\mathcal T_{n,k}\|,
\end{equation}
where $C$ is a positive constant independent of $n,k,\psi_{n,k}$ and $\mathcal T$.

Noting that supp $\psi_{n,k}\subset B(x_{n,k}, 2^{1-n/2})$, (\ref{the duality between CMO_L and H^1_L 1}) implies that supp $f_{n,k}\subset B(x_{n,k}, 2^{1-n/2})$. Consequently, as $x_{n,k}\in\mathcal B_n$, Lemma \ref{Ky2, Lemma 6.5} yields that there are $(H^1_L,2)$-atoms $a_j^{n,k}$ related to the balls $B(x_j^{n,k},r_j^{n,k})$ such that 
\begin{equation}\label{the duality between CMO_L and H^1_L 3}
f_{n,k}= \sum_{j=1}^\infty \lambda_j^{n,k} a_j^{n,k}, \quad \sum_{j=1}^\infty |\lambda_j^{n,k}|\leq C \|f_{n,k}\|_{h^1_n}
\end{equation}
with a positive constant $C$ independent of $\psi_{n,k}$ and $f_{n,k}$.

Since supp $f_{n,k}\subset B(x_{n,k}, 2^{1-n/2})$, by Lemma \ref{DZ, Lemma 2.3}, the function 
$$x\mapsto f(x)=\sum_{n,k} f_{n,k}(x)$$
 is well defined, and belongs to $L^1_{\rm loc}(\mathbb R^d)$. Moreover, for every $\phi\in C^\infty_c(\mathbb R^d)$, by $(i)$ of Corollary \ref{a consequence of DZ, Lemma 2.3}, there exists a finite set $\Gamma_\phi\subset \mathbb Z\times \mathbb Z^+$ such that 
$$\mathcal T(\phi)= \sum_{(n,k)\in \Gamma_\phi} \mathcal T(\psi_{n,k} \phi)= \sum_{(n,k)\in \Gamma_\phi} \int_{\mathbb R^d}f_{n,k}(y)\phi(y) dy= \int_{\mathbb R^d} f(y) \phi(y)dy.$$

Next, we need to show that $f\in H^1_L(\mathbb R^d)$. 

We first claim that there exists $C>0$ such that
\begin{equation}\label{the duality between CMO_L and H^1_L 4}
\sum_{n,k}\|f_{n,k}\|_{h^1_n}\leq C \|\mathcal T\|.
\end{equation}
Assume that (\ref{the duality between CMO_L and H^1_L 4}) holds for a moment. Then, from (\ref{the duality between CMO_L and H^1_L 3}), there are $(H^1_L,2)$-atoms $a_j^{n,k}$ and complex numbers $\lambda_j^{n,k}$ such that
$$f=\sum_{n,k} \sum_j \lambda_j^{n,k} a_j^{n,k}\quad\mbox{and}\quad \sum_{n,k} \sum_j |\lambda_j^{n,k}|\leq C \sum_{n,k} \|f_{n,k}\|_{h^1_n}\leq C \|\mathcal T\|.$$
By Theorem A, this proves that $f\in H^1_L(\mathbb R^d)$, moreover, $\|f\|_{H^1_L}\leq C \|\mathcal T\|$. 

Now, we return to prove (\ref{the duality between CMO_L and H^1_L 4}). 

Without loss of generality, we can assume that $\mathcal T$ is a real-valued functional. By (\ref{the duality between CMO_L and H^1_L 2}), for each $(n,k)\in\mathbb Z\times \mathbb Z^+$,  there exists $\phi_{n,k}\in C^\infty_c(\mathbb R^d)$ such that 
\begin{equation}\label{the duality between CMO_L and H^1_L 5}
\|\phi_{n,k}\|_{vmo_n}\leq 1\quad\mbox{and}\quad \|f_{n,k}\|_{h^1_n}\leq C \mathcal T(\psi_{n,k} \phi_{n,k}).
\end{equation}

For any $\Gamma\subset \mathbb Z\times \mathbb Z^+$  a finite set, let $\phi =\sum_{(n,k)\in\Gamma}\psi_{n,k}\phi_{n,k}\in C^\infty_c(\mathbb R^d)$. We prove that $\|\phi\|_{BMO_L}\leq C$. Indeed, let  $B(x,r)$  be an arbitrary ball satisfying $r\leq 2\rho(x)$. Then, by $(ii)$ of Corollary \ref{a consequence of DZ, Lemma 2.3}, we get
$$card \,\{(n,k)\in\mathbb Z\times \mathbb Z^+: B(x_{n,k}, 2^{1-n/2})\cap B(x,r)\ne \emptyset\}\leq C.$$
This together with (\ref{estimates for multipliers 1}) and (\ref{the duality between CMO_L and H^1_L 5}) give
\begin{eqnarray*}
\frac{1}{|B(x,r)|}\int_{B(x,r)}|\phi(y)- \phi_{B(x,r)}|dy &\leq& C \sup_{(n,k)\in\Gamma}\|\psi_{n,k}\phi_{n,k}\|_{BMO} \\
&\leq& C \sup_{(n,k)\in\Gamma}\|\psi_{n,k}\phi_{n,k}\|_{bmo_n}\\
&\leq& C \sup_{(n,k)\in\Gamma} \|\phi_{n,k}\|_{bmo_n}\leq C
\end{eqnarray*}
if $r\leq \rho(x)$, and as (\ref{estimates for multipliers 4}), 
\begin{eqnarray*}
\frac{1}{|B(x,r)|}\int_{B(x,r)}|\phi(y)|dy &\leq& C \sup_{(n,k)\in\Gamma} \frac{1}{|B(x,r)|}\int_{B(x,r)}|\psi_{n,k}(y)\phi_{n,k}(y)|dy\\
&\leq& C \sup_{(n,k)\in\Gamma} \|\phi_{n,k}\|_{bmo_n}\leq C
\end{eqnarray*}
if $\rho(x)\leq r\leq 2\rho(x)$. Therefore, Lemma \ref{equivalent norm for BMO_L} yields
\begin{eqnarray*}
\|\phi\|_{BMO_L}&\leq& C \Big\{  \sup_{r\leq \rho(x)}\frac{1}{|B(x,r)|}\int_{B(x,r)}|\phi(y)- \phi_{B(x,r)}|dy +\\
&&\quad\quad+ \sup_{\rho(x)\leq r\leq 2\rho(x)}\frac{1}{|B(x,r)|}\int_{B(x,r)} |\phi(y)|dy\Big\}\\
&\leq& C
\end{eqnarray*}
since $B(x,r)$  is an arbitrary ball satisfying $r\leq 2\rho(x)$. This implies that
\begin{eqnarray*}
\sum_{(n,k)\in\Gamma} \|f_{n,k}\|_{h^1_n} &\leq& C\sum_{(n,k)\in\Gamma} \mathcal T(\psi_{n,k}\phi_{n,k})= C \mathcal T(\phi)\\
&\leq& C \|\mathcal T\|  \|\phi\|_{BMO_L}\leq C \|\mathcal T\|.
\end{eqnarray*}
Consequently, (\ref{the duality between CMO_L and H^1_L 4}) holds since $\Gamma\subset \mathbb Z\times \mathbb Z^+$  is an arbitrary finite set and the constants $C$ are dependent of $\Gamma$. This ends the proof of Theorem \ref{the duality between CMO_L and H^1_L}.

\end{proof}

To prove Theorem \ref{density of C^infty in VMO_L}, we need to recall the following lemma.

\begin{Lemma}[see \cite{DZ}, Lemma 3.0]\label{DZ, Lemma 3.0}
There is a constant $\varepsilon>0$ such that for every $C'$ there exists $C>0$ such that for every $t>0$ and $|x-y|\leq C'\rho(x)$,
$$\Big| \frac{1}{(4\pi t)^{d/2}} e^{-\frac{|x-y|^2}{4t}} - T_t(x,y)\Big|\leq C \frac{1}{|x-y|^d}\Big(\frac{|x-y|}{\rho(x)}\Big)^\varepsilon.$$

\end{Lemma}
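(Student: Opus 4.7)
\medskip

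\noindent\textbf{Proof proposal for Lemma \ref{DZ, Lemma 3.0}.}

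The natural starting point is the Duhamel (perturbation) formula
$$P_t(x,y) - T_t(x,y) \;=\; \int_0^t \int_{\mathbb R^d} P_{t-s}(x,z)\, V(z)\, T_s(z,y)\, dz\, ds,$$
where $P_t(x,y) := (4\pi t)^{-d/2} e^{-|x-y|^2/(4t)}$ is the free Gaussian kernel. Since $V \geq 0$ the left side is non-negative, and the Feynman--Kac bound \eqref{the Feynman-Kac formula} allows us to replace $T_s(z,y)$ by $P_s(z,y)$ on the right, reducing the problem to estimating
$$I(t,x,y) \;:=\; \int_0^t \int_{\mathbb R^d} P_{t-s}(x,z)\, V(z)\, P_s(z,y)\, dz\, ds.$$

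To handle $I$, the key tool is the product-of-Gaussians (Brownian bridge) identity
$$P_{t-s}(x,z)\, P_s(z,y) \;=\; P_t(x,y)\, (4\pi\sigma)^{-d/2}\, \exp\!\Bigl(-\tfrac{|z-z_s|^2}{4\sigma}\Bigr),$$
with $\sigma := s(t-s)/t$ and $z_s := (1-s/t)\,x + (s/t)\,y$. Thus $I(t,x,y) = P_t(x,y)\int_0^t A_\sigma V(z_s)\, ds$, where $A_\sigma V(w)$ denotes the Gaussian average of $V$ at scale $\sqrt{\sigma}$ centered at $w$. Because $|x-y| \leq C'\rho(x)$, Proposition \ref{Shen, Lemma 1.4} yields $\rho(z_s) \simeq \rho(x)$ uniformly in $s \in [0,t]$, so every such average sees a comparable local $\rho$-scale.

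The extra factor $(|x-y|/\rho(x))^\varepsilon$ is harvested from Shen's sharpened bound on $\int V$: self-improvement of the reverse H\"older hypothesis (the fact that $V \in RH_{d/2}$ forces $V \in RH_q$ for some $q = d/2 + \delta$, $\delta > 0$) yields
$$\int_{B(w,r)} V(\zeta)\, d\zeta \;\leq\; C\, r^{d-2}\, (r/\rho(w))^{2 - d/q} \qquad (r \leq \rho(w)),$$
together with a complementary polynomial bound for $r > \rho(w)$. Setting $\varepsilon := 2 - d/q > 0$ and decomposing $A_\sigma V(z_s)$ into dyadic annuli, one obtains $A_\sigma V(z_s) \leq C\sigma^{-1}(\sqrt{\sigma}/\rho(x))^\varepsilon$ whenever $\sqrt{\sigma} \leq \rho(x)$. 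Changing variables to $u = s/t$ (so that $\sigma = t u(1-u)$ and $ds = t\,du$), the $s$-integral reduces to $(\sqrt{t}/\rho(x))^\varepsilon \int_0^1 (u(1-u))^{\varepsilon/2 - 1}\, du$, which is finite because $\varepsilon > 0$. Combining with the elementary pointwise bound $t^{\varepsilon/2} P_t(x,y) \leq C |x-y|^{\varepsilon - d}$ (immediate from $v^{(d-\varepsilon)/2} e^{-v/4} \leq C$ with $v := |x-y|^2/t$) then yields the desired estimate.

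The main technical obstacle is the complementary regime $\sqrt{\sigma} > \rho(x)$, which arises when $t \gtrsim \rho(x)^2$. Here one must invoke the $\rho$-scaled bound on $\int_{B(w,r)} V$ for $r > \rho(w)$ and exploit the Gaussian decay of $P_t(x,y)$ under $|x-y| \leq C'\rho(x)$ to show that this large-scale contribution is again absorbed into $C|x-y|^{\varepsilon - d}\rho(x)^{-\varepsilon}$. Patching the two regimes so as to produce a bound independent of $t$ is the most delicate bookkeeping in the argument, but it introduces no new ideas beyond Shen's analysis and the dyadic decomposition described above.
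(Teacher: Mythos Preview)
The paper does not prove this lemma at all; it is merely quoted from \cite{DZ}, Lemma~3.0, and used as a black box in the verification that $\gamma_1(f)=0$ for $f\in C_c^\infty(\mathbb R^d)$. So there is no ``paper's own proof'' to compare against.

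That said, your sketch is the standard perturbation argument and is essentially the proof given in \cite{DZ}. The Duhamel identity, the domination $T_s\le P_s$, the Gaussian product (Brownian-bridge) factorization, and the extraction of the exponent $\varepsilon = 2 - d/q$ from the self-improvement $V\in RH_{d/2}\Rightarrow V\in RH_q$ for some $q>d/2$ (equivalently, Shen's estimate $\int_{B(w,r)}V\le C r^{d-2}(r/\rho(w))^{\varepsilon}$ for $r\le\rho(w)$) are exactly the ingredients used there. Your reduction $t^{\varepsilon/2}P_t(x,y)\le C|x-y|^{\varepsilon-d}$ and the Beta-integral $\int_0^1 (u(1-u))^{\varepsilon/2-1}\,du<\infty$ correctly close the small-time case $\sqrt t\lesssim\rho(x)$.

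One small remark on the regime $\sqrt\sigma>\rho(x)$ that you flag as the delicate part: rather than invoking the large-radius growth bound on $\int_B V$ and then fighting it with Gaussian decay, it is cleaner to observe that once $|x-y|\le C'\rho(x)$ and $\sqrt t\ge c\,\rho(x)$, both $P_t(x,y)$ and $T_t(x,y)$ are individually $\le C t^{-d/2}\le C\rho(x)^{-d}\le C|x-y|^{-d}(|x-y|/\rho(x))^{\varepsilon}$ (using $|x-y|\lesssim\rho(x)$ and $\varepsilon\le d$), so the difference is trivially controlled. This avoids the bookkeeping you allude to and is how the split is usually handled.
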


\begin{proof}[\bf Proof of Theorem \ref{density of C^infty in VMO_L}]
As $H^1_L(\mathbb R^d)$ is the dual space of $VMO_L(\mathbb R^d)$ (see Theorem 4.1 of \cite{DDSTY}), by Theorem \ref{the duality between CMO_L and H^1_L} and the Hahn-Banach theorem, it suffices to show that $C^\infty_c(\mathbb R^d)\subset  VMO_L(\mathbb R^d)$. In fact, for every $f\in C^\infty_c(\mathbb R^d)$ with supp $f\subset B(0, R_0)$, one only needs to establish the following three steps:

{\bf Step 1.} By (\ref{the Feynman-Kac formula}), one has $\|e^{-tL}f\|_{L^2}\leq \|f\|_{L^2}$ for all $t>0$. Therefore,
$$\frac{1}{|B(x,t)|}\int_{B(x,t)}|f(y)-  e^{-tL}f(y)|^2 dy\leq \frac{1}{|B(x,t)|}4 \|f\|_{L^2}^2$$
for all $x\in\mathbb R^d$ and $t>0$. This implies that 
$$\gamma_2(f)=\lim\limits_{R\to \infty}\left(\sup\limits_{x\in\mathbb R^d, t\geq R}\Big(\frac{1}{|B(x,t)|}\int_{B(x,t)}|f(y)- e^{-tL}f(y)|^2 dy\Big)^{1/2}\right)=0.$$

{\bf Step 2.} For every $R> 2 R_0$ and $B(x,t)\cap B(0,R)=\emptyset$, by (\ref{the Feynman-Kac formula}) again,
\begin{eqnarray*}
&&\frac{1}{|B(x,t)|}\int_{B(x,t)}|f(y)- e^{-tL}f(y)|^2 dy\\
&\leq& \frac{1}{|B(x,t)|} \int_{B(x,t)}\Big( \frac{1}{(4\pi t)^{d/2}}\int_{B(0,R_0)} e^{-\frac{(R- R_0)^2}{4t}}|f(z)|dz\Big)^2 dy\\
&\leq& (4\pi)^{-d} \|f\|^2_{L^1}\frac{1}{t^{d}} e^{- \frac{R^2}{8 t}} \leq (4\pi)^{-d} \|f\|^2_{L^1} \Big(\frac{8d}{R^2}\Big)^d e^{-d}.
\end{eqnarray*}
Therefore,
$$\gamma_3(f)=\lim\limits_{R\to \infty}\left(\sup\limits_{B(x,t)\cap B(0,R)=\emptyset}\Big(\frac{1}{|B(x,t)|}\int_{B(x,t)}|f(y)- e^{-tL}f(y)|^2 dy\Big)^{1/2}\right)=0.$$

{\bf Step 3.} Finally, we need to show that
\begin{equation}\label{density of C^infty in VMO_L 1}
\gamma_1(f)= \lim_{r\to 0}\left( \sup_{x\in\mathbb R^d, t\leq r}\Big(\frac{1}{|B(x,t)|}\int_{B(x,t)} |f(y) - e^{-tL}f(y)|^2dy \Big)^{1/2} \right)=0.
\end{equation}

For every $x\in \mathbb R^d$ and $t>0$, we have
\begin{eqnarray*}
&&\left\{\frac{1}{|B(x,t)|}\int_{B(x,t)} \Big|f(y) - \frac{1}{(4\pi t)^{d/2}}\int_{\mathbb R^d} e^{-\frac{|y-z|^2}{4t}}f(z)dz\Big|^2 dy\right\}^{1/2}\\
&\leq& \sup_{|y-z|< t^{1/4}}|f(y) - f(z)| + 2 \|f\|_{L^\infty} \frac{1}{(4\pi t)^{d/2}} \int_{|z|\geq t^{1/4}} e^{-\frac{|z|^2}{4t}}dz.
\end{eqnarray*}

By the uniformly continuity of $f$, the above implies that
$$\lim_{r\to 0}\left( \sup_{x\in\mathbb R^d, t\leq r}\Big(\frac{1}{|B(x,t)|}\int_{B(x,t)} \Big|f(y) - \frac{1}{(4\pi t)^{d/2}}\int_{\mathbb R^d} e^{-\frac{|y-z|^2}{4t}}f(z)dz\Big|^2dy \Big)^{1/2} \right)=0.$$
Therefore, we can reduce (\ref{density of C^infty in VMO_L 1}) to showing that
\begin{equation}\label{density of C^infty in VMO_L 2}
\lim_{r\to 0}\left( \sup_{x\in\mathbb R^d, t\leq r}\Big(\frac{1}{|B(x,t)|}\int_{B(x,t)} \Big[\int_{\mathbb R^d} \Big|\frac{1}{(4\pi t)^{d/2}} e^{-\frac{|y-z|^2}{4t}}- T_t(y,z)\Big| |f(z)|dz\Big]^2dy \Big)^{1/2} \right)=0.
\end{equation}

From supp $f\subset B(0,R_0)$ and $\mathbb R^d\equiv \cup_{n,k} B(x_{n,k}, 2^{-n/2})$, there exists a finite set $\Gamma_f\subset \mathbb Z\times\mathbb Z^+$ such that supp $f\subset \cup_{(n,k)\in\Gamma_f} B(x_{n,k}, 2^{-n/2})$. As a consequence, (\ref{density of C^infty in VMO_L 2}) holds if we can prove that for each $(n,k)\in \Gamma_f$,
\begin{equation}\label{density of C^infty in VMO_L 3}
\lim_{r\to 0}\left( \sup_{x\in\mathbb R^d, t\leq r}\Big(\frac{1}{|B(x,t)|}\int_{B(x,t)} \Big[\int_{B(x_{n,k},2^{-n/2})} \Big|\frac{1}{(4\pi t)^{d/2}} e^{-\frac{|y-z|^2}{4t}}- T_t(y,z)\Big| |f(z)|dz\Big]^2dy \Big)^{1/2} \right)=0.
\end{equation}

We now prove (\ref{density of C^infty in VMO_L 3}). Let $x\in\mathbb R^d$ and $0<t< 2^{-2n}$. As $x_{n,k}\in \mathcal B_n$, by Proposition \ref{Shen, Lemma 1.4}, there is a constant $C>1$ such that $C^{-1} 2^{-n/2}\leq \rho(z)\leq C 2^{-n/2}$ for all $z\in B(x_{n,k},2^{-n/2})$. This together with (\ref{the Feynman-Kac formula}) and Lemma \ref{DZ, Lemma 3.0}, give
\begin{eqnarray*}
&&\left\{\frac{1}{|B(x,t)|}\int_{B(x,t)} \Big[\int_{B(x_{n,k},2^{-n/2})} \Big|\frac{1}{(4\pi t)^{d/2}} e^{-\frac{|y-z|^2}{4t}}- T_t(y,z)\Big| |f(z)|dz\Big]^2dy\right\}^{1/2}\\
&\leq&  2\|f\|_{L^\infty} \frac{1}{(4\pi t)^{d/2}} \int_{|z|\geq t^{1/4}} e^{-\frac{|z|^2}{4t}}dz + C 2^{ n\varepsilon/2} \|f\|_{L^\infty} \int_{|z|< t^{1/4}}\frac{1}{|z|^{d-\varepsilon}}dz,
\end{eqnarray*}
which implies that (\ref{density of C^infty in VMO_L 3}) holds. The proof of Theorem \ref{density of C^infty in VMO_L} is thus completed.

\end{proof}

\medskip
\noindent 
Department of Mathematics, University of Quy Nhon, 170 An Duong Vuong, Quy Nhon, Binh Dinh, Viet Nam \\
Email: dangky@math.cnrs.fr


\begin{thebibliography}{MTW1}



\bibitem{CW} R. R. Coifman and G. Weiss,  Extensions of Hardy spaces and their use in analysis. Bull. Amer. Math. Soc. 83 (1977), no. 4, 569--645. 

\bibitem{CLMS} R. Coifman, P.-L. Lions, Y. Meyer and S. Semmes, Compensated compactness and Hardy spaces. J. Math. Pures Appl. (9) 72 (1993), no. 3, 247--286.




\bibitem{Da2} G. Dafni, Local VMO and weak convergence in $h^1$. Canad. Math. Bull. 45 (2002), no. 1, 46--59. 

\bibitem{DDSTY} D. Deng, X. T. Duong, L. Song, C. Tan and L. Yan, Functions of vanishing mean osillation associated with operators and applications. Michigan Math. J. 56 (2008), 529--550.

\bibitem{DGMTZ} J. Dziuba\'nski, G. Garrig\'os, T. Mart\'inez, J. Torrea and J. Zienkiewicz, $BMO$ spaces related to Schr\"odinger operators with potentials satisfying a reverse H\"older inequality. Math. Z. 249 (2005), 329--356.

\bibitem{DZ} J. Dziuba\'nski and J. Zienkiewicz, Hardy space $H^1$ associated to Schr\"odinger operator with potential satisfying reverse H\"older inequality. Rev. Mat. Iberoamericana. 15 (1999), 279--296.

\bibitem{Fef} C.  Fefferman, Characterizations of bounded mean oscillation. Bull. Amer. Math. Soc. 77 (1971), no. 4, 587--588. 


\bibitem{Go} D. Goldberg, A local version of Hardy spaces, Duke J. Math. 46 (1979), 27-42.



\bibitem{JJ} P. W. Jones and J-L. Journ\'e,  On weak convergence in $H^1(\mathbb R^n)$. Proc. Amer. Math. Soc. 120 (1994), no. 1, 137--138. 

\bibitem{Ky2} L. D. Ky, Endpoint estimates for commutators of singular integrals related to Schr\"odinger operators, arXiv:1203.6335.

\bibitem{LL} C-C. Lin and H. Liu,  $BMO_L(\Bbb H^n)$ spaces and Carleson measures for Schr\"odinger operators. Adv. Math. 228 (2011), no. 3, 1631--1688.

\bibitem{NY} E. Nakai and K. Yabuta, Pointwise multipliers for functions of bounded mean oscillation. J. Math. Soc. Japan,  37 (1985), no. 2, 207--218. 


 \bibitem{Sh} Z. Shen, $L^p$ estimates for Schr\"odinger operators with certain potentials, Ann. Inst. Fourier. 45 (1995), no. 2, 513--546.


\bibitem{YZ2} D. Yang and Y. Zhou, Localized Hardy spaces $H^1$ related to admissible functions on RD-spaces and applications to Schr\"odinger operators. Trans. Amer. Math. Soc. 363 (2011), no. 3, 1197--1239.



\end{thebibliography}
\end{document}